\newtheorem{theorem}{Theorem}[section]
\newtheorem{proposition}[theorem]{Proposition}
\newtheorem{lemma}[theorem]{Lemma}
\newtheorem{corollary}[theorem]{Corollary}
\newtheorem{conjecture}[theorem]{Conjecture}
\theoremstyle{definition}
\newtheorem{example}[theorem]{Example}
\DeclareMathOperator{\depth}{depth}
\DeclareMathOperator{\roots}{roots}
\DeclareMathOperator{\supp}{supp}
\DeclareMathOperator{\deck}{Deck}
\DeclareMathOperator{\Ima}{im}
\newcommand{\Z}{\mathbb{Z}}
\newcommand{\N}{\mathbb{N}}
\newcommand{\Q}{\mathbb{Q}}
\newcommand{\isom}{\cong}
\newcommand{\normal}[1]{\langle\!\langle #1 \rangle\!\rangle}
\begin{document}

\title{Residually rationally solvable one-relator groups}

\subjclass[2020]{20E26, 20F14, 20F05}

\author{Marco Linton}
\address{Instituto de Ciencias Matem\'aticas, CSIC-UAM-UC3M-UCM, Madrid, Spain}
\email{marco.linton@icmat.es}

\maketitle

\begin{abstract}
We show that the intersection of the rational derived series of a one-relator group is rationally perfect and is normally generated by a single element. As a corollary, we characterise precisely when a one-relator group is residually rationally solvable.
\end{abstract}

\section{Introduction}

If $G$ is a group, the subgroup
\[
G^{(i)}_{\Q} := \left\{g \bigm\vert g^k\in \left[G_{\Q}^{(i-1)}, G^{(i-1)}_{\Q}\right],\, k\neq 0\right\}\trianglelefteq G,
\]
is the $i^{\text{th}}$ term of the \emph{rational derived series}, with $G^{(0)}_{\Q} = G$. It is so denoted as $G^{(i)}_{\Q}$ is the kernel of the natural map to the first homology group of $G^{(i-1)}_{\Q}$ over the rationals. A group $G$ is \emph{rationally solvable} (or \emph{$\Q$-solvable}) if $G_{\Q}^{(n)} = 1$ for some $n$ and is \emph{residually rationally solvable} if
\[
G_{\Q}^{(\omega)} := \bigcap_{i=0}^{\infty}G_{\Q}^{(i)} = 1. 
\]
The (rational) derived series can be continued to form the transcendental (rational) derived series. The intersection of all terms of the (rational) transcendental derived series is the maximal (rationally) perfect subgroup of $G$.

Baumslag proved in \cite{Ba71} that all positive one-relator groups, that is, groups with a single defining relation that do not mention any inverse generators, are residually solvable; or in other words, they have $G^{(\omega)} = 1$. Our main theorem computes $G^{(\omega)}_{\Q}$ for all one-relator groups and generalises Baumslag's result.

\begin{theorem}
\label{main}
    Let $F$ be a free group, $w\in F$ a word and $G = F/\normal{w}$ a one-relator group. There is a word $r\in F$, of length bounded above by that of $w$, and an integer $k\geqslant 1$ such that $w\in r^k[\normal{r}, \normal{r}]$ and
    \[
    G_{\Q}^{(\omega+1)} = G_{\Q}^{(\omega)} = \normal{r}_G.
    \]
    In particular, the maximal residually rationally solvable quotient of $G$ is the one-relator group $F/\normal{r}$.
\end{theorem}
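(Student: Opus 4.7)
The strategy is to split the theorem into two inclusions --- the easy $\normal{r}_G \subseteq G^{(\omega)}_{\Q}$ and the hard $G^{(\omega)}_{\Q} \subseteq \normal{r}_G$, the latter being equivalent to residual rational solvability of $F/\normal{r}$ --- and to attack both by induction on the complexity of $w$ via the Magnus--Karrass--Solitar hierarchy. The root $r$ is produced along the way: when $w$ has nonzero image in $F^{\mathrm{ab}}$, write $[w]_{\mathrm{ab}} = kv$ for a primitive element $v$ and lift to obtain a primitive $r \in F$ (so that $F/\normal{r}$ is free, hence residually rationally solvable by Magnus); when $w \in [F,F]$, use Magnus's substitution to realise $G$ as an HNN extension of a one-relator group $H = F'/\normal{w'}$ with $|w'| < |w|$, inductively obtain a root $r'$ for $H$, and lift $r'$ back to $F$, adjusting by a commutator if necessary to ensure both $|r| \leq |w|$ and $w \in r^k[\normal{r},\normal{r}]$.

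Easy inclusion and the stabilisation at $\omega$: once $w = r^k c$ with $c \in [\normal{r},\normal{r}]$ is established, the relation $w=1$ in $G$ forces $r^k \in [\normal{r}_G, \normal{r}_G]$, so the class of $r$ in $H_1(\normal{r}_G;\Q)$ vanishes. But $\normal{r}_G$ is generated as a $G$-module by $r$, so $H_1(\normal{r}_G;\Q) = 0$ entirely, making $\normal{r}_G$ rationally perfect. A rationally perfect normal subgroup $N$ satisfies $N = N^{(1)}_{\Q}$ and hence lies inside every $G^{(i)}_{\Q}$ by immediate induction, giving $\normal{r}_G \subseteq G^{(\omega)}_{\Q}$. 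Once the reverse inclusion is in hand, the rational perfectness of $G^{(\omega)}_{\Q} = \normal{r}_G$ yields $G^{(\omega+1)}_{\Q} = \bigl(G^{(\omega)}_{\Q}\bigr)^{(1)}_{\Q} = G^{(\omega)}_{\Q}$ at once.

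The main obstacle is the hard inclusion: showing $F/\normal{r}$ is residually rationally solvable. This should also be proved by induction along the Magnus hierarchy, reducing to showing that residual rational solvability propagates through the HNN extensions produced by Magnus's theorem. The expected tools are the Freiheitssatz (to ensure that Magnus free subgroups embed into the successive rational derived quotients of $F/\normal{r}$, preventing relations from collapsing them) together with Lyndon's theorem that the relation module of a one-relator group is cyclic, which controls the first rational derived quotient and propagates through the series. The delicate issue is verifying at each HNN step that the stable letters do not create new rational torsion obstructing the intersection from being trivial; handling the ``zero exponent sum but unbalanced'' case of Magnus's induction, where the substitution is non-trivial, is the technical crux I would expect to absorb most of the work.
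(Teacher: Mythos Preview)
Your ``easy inclusion'' and the stabilisation $G^{(\omega+1)}_{\Q}=G^{(\omega)}_{\Q}$ are fine. The construction of $r$, however, does not work.

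Consider the case $[w]_{\mathrm{ab}}\neq 0$. You propose to take $r$ a primitive lift of $v$ with $[w]_{\mathrm{ab}}=kv$; but this only gives $wr^{-k}\in [F,F]$, not $wr^{-k}\in[\normal{r},\normal{r}]$. Take $F=\langle a,b\rangle$ and $w=a[a,b]$, so $G\cong\bs(1,2)$. Here $[w]_{\mathrm{ab}}=[a]$, so your recipe gives $r=a$, $k=1$. In the free basis $a_i=b^iab^{-i}$ of $\normal{a}\leqslant F$ one computes $w=a_0^2a_1^{-1}$, whose class in $\normal{a}^{\mathrm{ab}}$ is $2a_0-a_1\neq ka_0$ for any $k$; hence $w\notin a^k[\normal{a},\normal{a}]$. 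Correspondingly $\normal{a}_G\cong\Z[1/2]$ is \emph{not} rationally perfect, while $G$ is metabelian and residually $\Q$-solvable, so $G^{(\omega)}_{\Q}=1$. The correct $r$ here is $w$ itself. The point is that the theorem does not promise any particular $r$: it asserts the \emph{existence} of one, and finding it is the whole content.

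The inductive step has the same gap. After Magnus rewriting you obtain $r'\in F'$ for the base $H$, and you say ``lift $r'$ back to $F$, adjusting by a commutator if necessary''. But $F'$ is typically an infinite-rank subgroup of a cyclic cover of $F$, and there is no evident way to push $r'$ down to an element of $F$ of length at most $|w|$ while preserving $w\in r^k[\normal{r},\normal{r}]$. Nor is it clear why residual $\Q$-solvability should pass up through the HNN extension: the Freiheitssatz and Lyndon's relation-module theorem control a single step, not the whole rational derived tower.

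The paper avoids the hierarchy entirely. One looks at the tower of covers $Y_n\to Z$ with $\pi_1(Y_n)=G^{(n)}_{\Q}$ and tracks the minimal subgraph $\Gamma_n\subset Y_n^{(1)}$ supporting the $2$-boundary $d_2^n(1)\in C_1(Y_n,\Q)$. Since $\dim_{\Q}C_1(\Gamma_n,\Q)\leqslant |w|$ and is nondecreasing, it stabilises at some $m$. The crucial step is a short group-ring lemma: for $G$ locally indicable and $a,b\in\Q G\smallsetminus\{0\}$, $\supp(ab)\subset\supp(b)$ forces $\supp(a)=\{1\}$. Applied with $b=d_2^m(1)$ this shows $Z_1(\Gamma_m,\Q)$ is at most one-dimensional, so $\Gamma_m$ is a circle; the immersed loop it covers in $Z^{(1)}$ \emph{is} $r$. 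Freeness of the relevant kernel then comes from \cref{free_kernel}, and residual $\Q$-solvability of free groups finishes. The element $r$ is thus produced homologically, not combinatorially, and this is exactly what your proposal is missing.
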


Torsion-free one-relator groups cannot contain finitely generated rationally perfect subgroups since they are locally indicable by a result of Brodskii \cite{Br84} (see also Howie \cite{Ho82}) and so the subgroup $G^{(\omega)}_{\Q}$ from \cref{main} is always infinitely generated when it is non-trivial. The prototypical examples of non residually (rationally) solvable one-relator groups are the Baumslag--Gersten groups \cite{Ba69}:
\[
G(1, n) \isom \langle a, t \mid [a^t, a] = a^{n-1}\rangle.
\]
Here we adopt the convention that $a^t = t^{-1}at$ and $[a^t, a] = a^ta(a^t)^{-1}a^{-1}$. When $n\neq 1$, the relator expresses (a power of) $a$ as a commutator over itself and a conjugate of itself. Thus, the normal closure of $a$ is perfect when $n = 2$ and is rationally perfect (has torsion abelianisation) otherwise. \cref{main} says that the maximal rationally perfect subgroup of any one-relator group is essentially always of this form; that is, the normal closure of a single element that can be read off from the relator. In fact, it is worth pointing out that in general, if $G = F/\normal{w}$ is a one-relator group, $k\geqslant 1$ and $w\in r^k[\normal{r}, \normal{r}]$, then $\normal{r}_G$ is a $\Q$-perfect subgroup of $G$.

If $G$ is a one-relator group with $H^2(G) = 0$, then a result of Strebel \cite{St74} implies that $G^{(\alpha)} = G^{(\alpha)}_{\Q}$ for all ordinals $\alpha$. Thus, \cref{main} is also true for the ordinary derived series of one-relator groups $G$ under the additional hypothesis that $H^2(G) = 0$. We thank Jens Harlander for pointing this out to us. We conjecture that \cref{main} holds also for the derived series of all one-relator groups, with a minor modification, see \cref{conjecture}.

With little work, we obtain a satisfying characterisation of residually $\Q$-solvable one-relator groups.

\begin{corollary}
\label{main_corollary}
Let $F$ be a free group, $w\in F$ a word and $G = F/\normal{w}$ a one-relator group. The following are equivalent:
\begin{enumerate}
\item\label{itm1} $G$ is residually $\Q$-solvable.
\item\label{itm2} $G^{(n)}_{\Q}$ is a free group for some integer $n\geqslant 0$.
\item\label{itm3} For all words $r\in F$ of length at most that of $w$, and all integers $k\geqslant 1$, if $w\in r^k[\normal{r}, \normal{r}]$, then $w$ is conjugate to $r$ or $r^{-1}$.
\end{enumerate}
\end{corollary}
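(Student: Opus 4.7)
The plan is to establish the implications $(3)\Rightarrow(1)$, $(1)\Rightarrow(3)$, and $(2)\Rightarrow(1)$ using \cref{main} together with standard facts, with $(1)\Rightarrow(2)$ as the main obstacle.

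For $(3)\Rightarrow(1)$ I would apply \cref{main} to $w$ to produce $r\in F$ with $|r|\leqslant|w|$, $k\geqslant 1$, $w\in r^k[\normal{r},\normal{r}]$, and $G^{(\omega)}_{\Q}=\normal{r}_G$; hypothesis (3) then forces $w$ to be conjugate to $r^{\pm 1}$, so $\normal{r}=\normal{w}$ in $F$ and hence $G^{(\omega)}_{\Q}=\normal{r}_G=1$. Conversely, for $(1)\Rightarrow(3)$, given $r,k$ as in the premise, write $w=r^k c$ with $c\in[\normal{r},\normal{r}]$; passing to $G$ yields $r^k=c^{-1}\in[\normal{r}_G,\normal{r}_G]$, and by conjugation $(grg^{-1})^k\in[\normal{r}_G,\normal{r}_G]$ for every $g\in G$, since $[\normal{r}_G,\normal{r}_G]$ is characteristic in $\normal{r}_G\trianglelefteq G$ and hence normal in $G$. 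A straightforward induction on $n$ then shows $\normal{r}_G\subseteq G^{(n)}_{\Q}$: if the containment holds at level $n$, then $(grg^{-1})^k\in[\normal{r}_G,\normal{r}_G]\subseteq[G^{(n)}_{\Q},G^{(n)}_{\Q}]$ places each normal generator of $\normal{r}_G$ in $G^{(n+1)}_{\Q}$. Hence $\normal{r}_G\subseteq G^{(\omega)}_{\Q}=1$, so $r\in\normal{w}$, and combined with $|r|\leqslant|w|$, Newman's spelling theorem forces $w\sim r^{\pm 1}$.

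For $(2)\Rightarrow(1)$, a routine check shows each $G^{(i)}_{\Q}$ is isolated in $G$ (if $g^m\in G^{(i)}_{\Q}$ with $m\neq 0$ then $g^{mk}\in[G^{(i-1)}_{\Q},G^{(i-1)}_{\Q}]$ for some $k\neq 0$, so $g\in G^{(i)}_{\Q}$), whence $G^{(n+i)}_{\Q}=(G^{(n)}_{\Q})^{(i)}_{\Q}$ as subgroups of $G$. Since free groups are residually nilpotent by Magnus, they are residually $\Q$-solvable, so $G^{(\omega)}_{\Q}=(G^{(n)}_{\Q})^{(\omega)}_{\Q}=1$.

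The main obstacle is $(1)\Rightarrow(2)$. The same induction as in $(1)\Rightarrow(3)$ shows that torsion elements of any group lie in $G^{(\omega)}_{\Q}$, so residual $\Q$-solvability forces $G$ to be torsion-free -- equivalently, by the Karrass--Magnus--Solitar torsion theorem, $w$ is not a proper power. My plan is then to invoke the Magnus--Moldavanskii hierarchy for torsion-free one-relator groups, inducting on its depth with base case $G$ free. For the inductive step, one would analyse an appropriate rational derived subgroup of $G$ as a directed union of HNN extensions of one-relator groups of strictly smaller hierarchy depth, each still residually $\Q$-solvable by restriction to subgroups. The anticipated crux -- and the main work -- is that the rational derived subgroups of a one-relator group are typically not themselves one-relator, so the induction must either be run over a broader class of groups (such as ascending unions of one-relator groups), or one must establish by a direct combinatorial or topological argument (e.g.\ by showing a suitable presentation 2-complex for $G^{(n)}_{\Q}$ collapses onto a graph) that some $G^{(n)}_{\Q}$ is free.
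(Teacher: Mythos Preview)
Your arguments for $(3)\Rightarrow(1)$ and $(2)\Rightarrow(1)$ are correct and essentially match the paper. Your direct argument for $(1)\Rightarrow(3)$---showing $\normal{r}_G$ is rationally perfect and hence contained in $G^{(\omega)}_{\Q}=1$---is also what the paper does (phrased as an appeal to \cref{main}), but your finish has two slips. First, from $r\in\normal{w}$ you should also note $w\in\normal{r}$ (immediate from $w=r^kc$ with $c\in\normal{r}$), giving $\normal{w}=\normal{r}$; the length bound then plays no role. Second, the conclusion $w\sim r^{\pm1}$ from $\normal{w}=\normal{r}$ is Magnus's theorem on normal closures \cite{Ma30}, not Newman's spelling theorem, which concerns one-relator groups with torsion and does not yield this statement.

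The genuine gap is $(1)\Rightarrow(2)$. Your proposed induction on Magnus--Moldavanskii depth is not carried out, and as you yourself note, the rational derived subgroups are not one-relator, so it is unclear how such an induction would close; the sketch you give is really a restatement of the problem. The paper bypasses this entirely by invoking \cref{freeby}: since $\Q$-solvable groups are locally indicable and the class is closed under direct products, a residually $\Q$-solvable one-relator group is fully residually $\Q$-solvable, and hence by \cref{freeby} splits as $F\rtimes C$ with $F$ free and $C$ $\Q$-solvable. Choosing $n$ with $C^{(n)}_{\Q}=1$, one gets $G^{(n)}_{\Q}\leqslant\ker(G\to C)=F$, so $G^{(n)}_{\Q}$ is free. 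Thus the implication you flagged as the main obstacle is a short consequence of a result already established in the paper; no hierarchy argument is needed.
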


The equivalence between (\ref{itm1}) and (\ref{itm2}) is a consequence of \cref{free_kernel}, a key step in the proof of \cref{main}. In fact, a consequence of our proofs is that the integer $n$ from (\ref{itm2}) may be taken to be the length of the relator $w$. If $G$ is not residually rationally solvable, then $G^{(n)}_{\Q}$ is an extension of the maximal rationally perfect subgroup by a free group.

Using \cref{main}, we recover and strengthen Baumslag's result.

\begin{corollary}
\label{positive}
Torsion-free positive one-relator groups are free-by-($\Q$-solvable) and so are residually $\Q$-solvable. Positive one-relator groups with torsion are free-by-solvable and so are residually solvable.
\end{corollary}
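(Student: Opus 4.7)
The approach is to unwind \cref{main} (or \cref{main_corollary}) under the assumption of positivity.

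\textbf{Torsion-free case.} Suppose $w$ is positive and not a proper power in $F$ (equivalently, $G$ is torsion-free, by Newman). By \cref{main}, there exist $r\in F$ and $k\geqslant 1$ with $|r|\leqslant|w|$, $w\in r^k[\normal{r},\normal{r}]$, and $G^{(\omega)}_{\Q}=\normal{r}_G$. I verify condition~(\ref{itm3}) of \cref{main_corollary}. Abelianising $F$, $k\bar r=\bar w$, so positivity of $w$ gives $\bar r \geqslant 0$ componentwise with $|\bar r|_1=|w|/k$. Combined with $|r|\geqslant |\bar r|_1$ and $|r|\leqslant|w|$, this forces $|w|/k \leqslant |r| \leqslant |w|$. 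When $k=1$, $r$ must be a positive word of length $|w|$ with $\bar r=\bar w$, and a combinatorial matching identifies $r$ with a cyclic conjugate of $w$. When $k\geqslant 2$, expanding $w$ in Schreier generators of $\normal{r}$ shows its image in $\normal{r}^{\ab}$ is a non-negative integer combination of Schreier generators whose coefficients have $\gcd$ equal to one (a consequence of $w$ not being a proper power), contradicting the required $k$-divisibility. Hence $r$ is conjugate to $w^{\pm 1}$, so $\normal{r}_G=1$ and $G^{(\omega)}_{\Q}=1$. The free-by-($\Q$-solvable) structure follows from \cref{main_corollary}(\ref{itm2}).

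\textbf{Torsion case.} The ``extra trick'' is Newman's theorem that every torsion-free subgroup of a one-relator group with torsion is free. Write $w=s^n$ with $s$ positive, not a proper power, and $n\geqslant 2$. Torsion elements of $G$ are, by the classical theorem on torsion in one-relator groups, conjugate to non-trivial powers of $s$; their images in $G^{\ab}$ are non-zero multiples of $\bar s$. Since $s$ is positive and non-trivial, $\bar s$ is non-zero in $F^{\ab}$ and has order exactly $n$ in $G^{\ab}=F^{\ab}/\langle n\bar s\rangle$. Hence no non-trivial torsion element of $G$ lies in $[G,G]$, so $[G,G]$ is torsion-free and therefore free by Newman's theorem. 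Since $G/[G,G]=G^{\ab}$ is abelian, $G$ is free-by-abelian, in particular free-by-solvable, and hence residually solvable.

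The main obstacle I anticipate is the $k\geqslant 2$ subcase of the torsion-free analysis, where positivity must be translated into non-$k$-divisibility of the image of $w$ in the Schreier-abelianisation of $\normal{r}$ via an explicit calculation with Schreier generators. The remaining steps are relatively direct given \cref{main} and Newman's theorem.
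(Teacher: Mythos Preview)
Your torsion-free argument has gaps in both subcases. For $k=1$: you correctly deduce that $r$ must be positive with $|r|=|w|$ and $\bar r=\bar w$, but then assert that ``a combinatorial matching identifies $r$ with a cyclic conjugate of $w$.'' This is false as stated: $aabb$ and $abab$ are positive, of equal length, with equal abelianisation, yet are not conjugate. You are not using the full strength of $w\in r[\normal{r},\normal{r}]$, only its abelian shadow. For $k\geqslant 2$: the claim that the Schreier coefficients of $w$ in $\normal{r}^{\ab}$ have $\gcd$ equal to $1$ ``as a consequence of $w$ not being a proper power'' is unsupported; not being a proper power in $F$ does not obviously control divisibility in $\normal{r}^{\ab}$. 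The paper treats this case as immediate from \cref{main_corollary}; the clean way to see it is via the proof of \cref{main}: the lift of $w$ to the cover $Y_m$ has its $1$-chain supported on the embedded circle $\Gamma_m$, and since $w$ is positive the lifted \emph{path} (not just its chain) must stay on $\Gamma_m$ and traverse it monotonically, forcing $w$ to literally equal a cyclic rotation of $r^k$ as a word. Then $w$ not a proper power gives $k=1$.

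Your torsion case rests on a theorem that does not exist. There is no result of Newman (or anyone else) asserting that torsion-free subgroups of one-relator groups with torsion are free; such groups are hyperbolic, even virtually special, but torsion-free subgroups need not be free. Your argument that $[G,G]$ is torsion-free is correct and pleasant, but the leap from ``torsion-free'' to ``free'' is unjustified. The paper's ``extra trick'' is entirely different: apply the torsion-free case to $F/\normal{s}$ to find a homomorphism to a $\Q$-solvable group $Q$ separating all proper subwords of $s$, compose with $F/\normal{s^n}\to F/\normal{s}$, and invoke \cref{free_kernel} to see the kernel is a free product of a free group with copies of $\Z/n\Z$. That kernel is then free-by-abelian (its commutator subgroup acts freely on the Bass--Serre tree), giving free-by-solvable.
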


Using \cref{main}, we may also algorithmically determine whether a one-relator group is residually $\Q$-solvable or not. In fact, the algorithm computes the element $r$ from \cref{main}. Note that in general it is undecidable whether a finitely presented group is residually rationally solvable. 

\begin{corollary}
\label{algorithm}
There is an algorithm that, on input a word $w$ over an alphabet $S\sqcup S^{-1}$, decides whether the one-relator group $F(S)/\normal{w}$ is residually $\Q$-solvable.
\end{corollary}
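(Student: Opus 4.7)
The plan is to check condition (\ref{itm3}) of \cref{main_corollary} directly. Since the alphabet $S$ is finite there are only finitely many words $r$ over $S$ of length at most $|w|$, and conjugacy in a free group is decidable via cyclic reduction, so the whole procedure reduces to the following subproblem: given $r, w \in F(S)$, decide whether there exists $k \geqslant 1$ with $w \in r^k[\normal{r}, \normal{r}]$.

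I would handle this subproblem via the Magnus embedding. Let $K = \normal{r}$ and $Q = F(S)/K$, and consider the injective homomorphism
\[
\phi \colon F(S)/[K,K] \hookrightarrow Q \ltimes \Z[Q]^{|S|},
\qquad
x_i \longmapsto (\bar{x}_i, t_i),
\]
where $t_1, \dots, t_{|S|}$ is the standard basis of the free $\Z[Q]$-module of rank $|S|$. First check whether $w \in K$; this is the word problem in the one-relator group $Q$, which is decidable by Magnus's theorem. If $w \notin K$ no such $k$ exists. Otherwise $\phi(r)$ and $\phi(w)$ both have trivial first coordinate, and on the subgroup of elements with trivial first coordinate the semidirect product action vanishes, so writing $\phi(r) = (1, v_r)$ and $\phi(w) = (1, v_w)$ with $v_r, v_w \in \Z[Q]^{|S|}$ yields $\phi(r^k) = (1, kv_r)$; the subproblem becomes the linear equation $v_w = kv_r$ in $\Z[Q]^{|S|}$.

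The vectors $v_r, v_w$ are explicitly computable from $r,w$ by Fox calculus: each coordinate is a finite formal $\Z$-combination of elements of $Q$. If $v_r = 0$, the answer is \emph{yes} iff $v_w = 0$ (and one takes $k=1$); if $v_r \neq 0$, pick any coordinate and any term in $v_r$ with nonzero $\Z$-coefficient, which pins $k$ down to at most one candidate integer, after which one verifies both $k \geqslant 1$ and the full identity $v_w = kv_r$. Either way, the check amounts to finitely many equality tests in $\Z[Q]$, each of which reduces to the word problem in $Q$.

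The main obstacle is ensuring that equality in $\Z[Q]^{|S|}$ is genuinely effective: the entries are group-ring elements that can only be collected and compared modulo the word problem in $Q$, so the whole algorithm rests on Magnus's theorem providing decidability in $Q$. Everything else, namely the enumeration of candidate $r$, the conjugacy test in $F(S)$, the evaluation of Fox derivatives, and integer arithmetic, is routine.
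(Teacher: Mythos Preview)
Your proposal is correct and is essentially the paper's argument in different clothing. The paper's subroutine for deciding whether $w\in r^k[\normal{r},\normal{r}]$ computes a ball in the Cayley graph of $Q=F(S)/\normal{r}$ and tests whether the $1$-chain traced by $w$ is an integer multiple of the $1$-chain traced by $r$; but that $1$-chain is exactly the Fox-derivative vector in $\Z[Q]^{|S|}$, so your Magnus-embedding formulation and the paper's chain formulation are the same computation, both resting on Magnus's solution to the word problem in $Q$. The only difference in the outer loop is that the paper goes on to single out the $r$ whose normal closure is maximal (so as to also output a normal generator of $G_{\Q}^{(\omega)}$), whereas you simply verify condition~(\ref{itm3}) for every candidate $r$; for the bare decision problem stated in \cref{algorithm} your version is enough.
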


It would be interesting to obtain similar characterisations for residually solvable one-relator groups and residually (torsion-free) nilpotent one-relator groups. An intriguing question of Arzhantseva asks whether all one-relator groups are residually amenable \cite[Problem 18.6]{Ko18}. In light of \cref{main}, it would be especially interesting to know whether the Baumslag--Gersten groups are residually amenable.

We conclude this introduction by remarking that this article is almost entirely self-contained: the only results we use and do not prove are old theorems of Magnus on one-relator groups \cite{Ma30,Ma32} and on residual $\Q$-solvability of free groups \cite{Ma35} and the Burns--Hale Theorem \cite{BH72}. We also take inspiration from Howie's tower method \cite{Ho81}.

\subsection*{Acknowledgements}
The author would like to thank Goulnara Arzhantseva, Jens Harlander and Andrei Jaikin-Zapirain for useful comments on a previous version of this article. The author would also like to thank the anonymous referees for their helpful suggestions. This work has received funding from the European Research Council (ERC) under the European Union's Horizon 2020 research and innovation programme (Grant agreement No. 850930).

\section{Weakly reducible complexes and their regular covers}

Throughout this article, a \emph{2-complex} will be taken to mean a two-dimensional CW-complex whose attaching maps are given by immersions. All maps between 2-complexes will be assumed to be \emph{combinatorial}, that is, open $n$-cells are sent homeomorphically to open $n$-cells. In fact, most maps will just be covering spaces or inclusions.

A \emph{weak elementary reduction} is a pair of 2-complexes $X\subset Z$ such that $Z - X$ consists of a single 1-cell and at most one 2-cell whose attaching map traverses the 1-cell. It is an \emph{elementary collapse} if $Z - X$ contains a 2-cell whose attaching map traverses the 1-cell precisely once. Note that our definition of a weak elementary reduction is a weak version of Howie's notion of an elementary reduction from \cite{Ho82}.

If $\alpha$ is an ordinal number, say a 2-complex $Z$ is \emph{weakly $\alpha$-reducible to $X$} if there are subcomplexes $X_{\beta}\subset Z$ for each $\beta\leqslant\alpha$ such that $X_0 = X$, $X_{\alpha} = Z$, $X_{\beta-1}\subset X_{\beta}$ is a weak elementary reduction for each successor ordinal $\beta$ and $X_{\beta} = \bigcup_{\gamma<\beta}X_{\gamma}$ for each limit ordinal $\beta$. Say $Z$ is \emph{weakly $\alpha$-reducible} if it is weakly $\alpha$-reducible to its 0-skeleton. When $\alpha \leqslant \omega$, we shall just say that $Z$ is \emph{weakly reducible} or \emph{weakly reducible to $X$}. For locally finite 2-complexes, any weakly $\alpha$-reducible 2-complex (to a given subcomplex) is actually weakly reducible (to a given subcomplex).

\begin{lemma}
\label{reshuffle}
Let $\alpha$ be an ordinal and let $Z$ be a 2-complex weakly $\alpha$-reducible to $X\subset Z$. If $Z - X$ has countably many cells and if each 1-cell in $Z-X$ is traversed by finitely many attaching maps of 2-cells, then $Z$ is weakly reducible to $X$.
\end{lemma}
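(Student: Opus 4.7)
The strategy is to reorganize the given $\alpha$-reduction into one of length at most $\omega$ by topologically sorting a dependency relation on the 1-cells of $Z-X$. The original reduction yields, at each successor stage $\beta$, a 1-cell $e_\beta$ and at most one 2-cell $d_\beta$; record this pairing as an injection $\tau$ from the 2-cells of $Z-X$ into the 1-cells of $Z-X$ (both sets countable by hypothesis). The goal is to reindex the 1-cells as $\ell_1,\ell_2,\ldots$ so that whenever a 2-cell $d=\tau^{-1}(\ell_k)$ is added alongside $\ell_k$ at stage $k$, every 1-cell of $Z-X$ traversed by its attaching map already occurs at some position $\leqslant k$.

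Define a relation $\prec$ on the 1-cells of $Z-X$ by $e'\prec e$ if and only if $e\neq e'$ and $e'$ lies in the attaching map of the (unique, when it exists) 2-cell $\tau^{-1}(e)$. Then $\prec$ is well-founded: $e'\prec e$ forces $e'$ to have been added strictly earlier than $e$ in the original reduction, and ordinals admit no infinite descending chain. Moreover each $e$ has only finitely many immediate $\prec$-predecessors, since the attaching map of any 2-cell is a combinatorial map from the compact $S^1$ and so traverses only finitely many 1-cells. The main step is then to show that the full transitive ancestry $A(e)$ under $\prec$ is finite for every 1-cell $e$: otherwise the ancestor graph rooted at $e$ would be an infinite directed graph with finite out-degree, and K\"onig's lemma would produce an infinite directed path, giving an infinite $\prec$-descending chain and contradicting well-foundedness. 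This K\"onig-style argument is where I expect the main difficulty to lie; the rest of the proof is bookkeeping.

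Given finite ancestries, fix any enumeration of the 1-cells of $Z-X$ as $f_1,f_2,\ldots$ and build $\ell_1,\ell_2,\ldots$ iteratively: at each stage take the least index $i$ with $f_i$ not yet scheduled, and append to the sequence a topological linearization (with respect to $\prec$) of the finite set $(A(f_i)\cup\{f_i\})$ minus the elements already scheduled. This exhausts the 1-cells of $Z-X$ and respects $\prec$. Finally define $Y_0=X$ and $Y_k=Y_{k-1}\cup\{\ell_k\}\cup\{\tau^{-1}(\ell_k)\}$, interpreting the last set as empty when $\ell_k$ is not in the image of $\tau$. Each inclusion $Y_{k-1}\subset Y_k$ is an elementary reduction, because if the 2-cell $d=\tau^{-1}(\ell_k)$ is added, the 1-cells of $Z-X$ that its attaching map traverses are either $\ell_k$ itself or $\prec$-predecessors of $\ell_k$, all of which appear at positions $\leqslant k$ and so lie in $Y_k$. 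Since $\bigcup_k Y_k = Z$, this exhibits $Z$ as reducible to $X$.
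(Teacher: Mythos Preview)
Your proof is correct and follows the same overall strategy as the paper: build a dependency relation on the elementary-reduction steps, show via well-foundedness and K\"onig that each step has only finitely many ancestors, and then reorder into an $\omega$-sequence. The paper carries out the reordering differently, by defining a finite-to-one ``height'' function $\bar\phi\colon V(\Gamma)\to\N$ on the dependency graph and grouping the steps by height; verifying that $\bar\phi$ is finite-to-one uses finite \emph{forward} branching, i.e., the hypothesis that each 1-cell in $Z-X$ is traversed by only finitely many 2-cells. Your direct scheduling (repeatedly take the least-indexed unscheduled 1-cell and insert a topological sort of its finite ancestry) only ever uses finite \emph{backward} branching, which comes for free from compactness of $S^1$, so your argument in fact proves the lemma without that hypothesis.
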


\begin{proof}
    For each ordinal $\beta\leqslant \alpha$, denote by $X_{\beta}\subset Z$ the associated subcomplex. We construct a directed graph $\Gamma$ as follows. The vertices of this graph are the successor ordinals $\beta\leqslant \alpha$ and two vertices $\beta$ and $\gamma$ are connected by a directed edge if the following holds:
    \begin{enumerate}
        \item $\beta<\gamma$.
        \item $X_{\gamma} - X_{\gamma -1}$ contains a 2-cell $c$.
        \item $X_{\beta} - X_{\beta-1}$ contains a 1-cell traversed by the attaching map for $c$.
    \end{enumerate} 
Intuitively, we are encoding the dependency between the different weak elementary reductions in the graph $\Gamma$. Since each 1-cell in $Z - X$ is traversed by finitely many attaching maps of 2-cells, this directed graph is locally finite. Denote by $S\subset V(\Gamma)$ the vertices with no incoming edges. If $v\in V(\Gamma)$ is a vertex, denote by $\Gamma_v\subset \Gamma$ the union of all directed paths in $\Gamma$ that end at $v$. The well-order on the vertices, together with the local finiteness of $\Gamma$, implies that $\Gamma_v$ is finite for all $v\in V(\Gamma)$ as each directed path ending at $v$ yields a descending chain. Indeed, if $\Gamma_v$ was infinite, then the local finiteness of $\Gamma$ implies that there is some edge leading into $v$ with origin $u$ such that $\Gamma_u$ is infinite. Repeating this, one may find an infinite descending chain, contradicting the well-ordering on the vertices. Furthermore, the well-ordering also implies that $\Gamma$ is acyclic. Denote by $\depth(v)$ the maximal length of a directed path in $\Gamma_v$, which is well-defined since $\Gamma$ is acyclic, and by $\roots(v)\subset S$ the set of vertices in $\Gamma_v$ that have no incoming edges.

    Choose any injection $\phi\colon S\to \N$. We extend $\phi$ to a map $\bar{\phi}\colon V(\Gamma)\to \N$ by defining 
    \[
    \bar{\phi}(v) = \min{\{k \mid \depth(v)\leqslant k, \roots(v)\subset \phi^{-1}(\{0, \ldots, k-1\})\}}.
    \]
    Since $\Gamma$ is locally finite, $\bar{\phi}$ is a finite to one map.

    For each successor ordinal $\beta\leqslant \alpha$, denote by $c_{\beta} = X_{\beta} - X_{\beta-1}$, which is either a 1-cell or a 1-cell and a 2-cell. Now define $Z_0 = X$ and
    \[
    Z_{l+1} = Z_l \cup \bigcup_{\beta\in \bar{\phi}^{-1}(l+1)}c_{\beta}.
    \]
    By definition of $\bar{\phi}$, we have that $Z_{l+1}$ actually weakly $k$-reduces to $Z_{l}$ where $k = |\bar{\phi}^{-1}(l+1)|<\infty$ and $Z = \bigcup_{i=0}^{\infty}Z_i$. Thus, $Z$ weakly reduces to $X$.
\end{proof}

Finite sheeted covers of weakly reducible complexes are almost never themselves weakly reducible. Howie took advantage of the fact that infinite cyclic covers of weakly reducible complexes are weakly reducible in \cite{Ho82}. We here generalise this fact to locally indicable covers.

\begin{proposition}
\label{locally_indicable_cover}
    Let $Z$ be a 2-complex weakly reducible to $X$, such that $Z - X$ contains countably many 2-cells and each 1-cell in $Z-X$ is traversed by finitely many attaching maps of 2-cells. If $p\colon Y\to Z$ is a regular cover with $\deck(p)$ locally indicable, then $Y$ is weakly reducible to $p^{-1}(X)$.
\end{proposition}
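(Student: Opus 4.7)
My plan is to proceed by transfinite induction on the length $\alpha$ of the given reduction $X = X_0 \subset \cdots \subset X_\alpha = Z$. At each successor step, given a reduction from $p^{-1}(X)$ to $p^{-1}(X_{\beta-1})$ by the inductive hypothesis, I would extend it to a reduction from $p^{-1}(X)$ to $p^{-1}(X_\beta)$ by appending further elementary reductions. Limit ordinals I handle by taking unions, and \cref{reshuffle} at the end condenses the transfinite reduction into an $\omega$-reduction, using the countability of $Z - X$ together with the countability of $H$ (which transfers the relevant finiteness hypothesis from $Z-X$ to $Y - p^{-1}(X)$).

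For the easy successor case $X_\beta - X_{\beta-1} = \{e\}$, the lifts $\{e_h\}_{h \in H}$ are free 1-cells in $p^{-1}(X_\beta)$, so I would append them one at a time. For the substantive case $X_\beta - X_{\beta-1} = \{e, f\}$, I would exploit the fact that an elementary reduction adding a 1-cell $e'$ and a 2-cell $f'$ does not require $e' \in \partial f'$; it only requires that $\partial f'$ lie in the previous subcomplex. The plan is to construct an injective matching $M \colon \{f_h\}_{h \in H} \to \{e_{h'}\}_{h' \in H}$ and a well-founded ordering of $\{f_h\}$ so that when $f_h$ is added paired with $M(f_h)$, every 1-cell of $\partial f_h$ is already present---either in $p^{-1}(X_{\beta-1})$, added earlier as an unmatched $e$-lift in a preliminary phase, or added as $M(f_{h'})$ for some $f_{h'}$ preceding $f_h$.

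The construction of $M$ is where the local indicability of $H = \deck(p)$ enters. Let $g_1, \ldots, g_n \in H$ be the Fox-derivative levels of $\partial f$ at $e$, so that the attaching map of $f_h$ traverses the $e$-lifts $e_{hg_1}, \ldots, e_{hg_n}$. By the Burns--Hale theorem, $H$ admits a left-invariant total order; let $P$ be its positive cone. I would partition the $\{f_h\}$ according to whether $h$ lies in $P$ and define $M$ separately on each half, pairing in opposite directions so that the resulting dependencies fan inward from the identity in a well-founded, tree-like fashion. This would generalise Howie's treatment of infinite cyclic covers \cite{Ho82} via the Burns--Hale characterisation of left-orderability.

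The hard part will be verifying this combinatorial construction in full generality. For $H = \Z$ an explicit matching works---for instance, pairing $f_k$ with $e_{2k+1}$ yields a binary-tree dependency rooted at $f_0$ and $f_{-1}$, so every other $f_k$ is reachable in finitely many steps---and the bulk of the argument will be mimicking this V-shape using the left-order supplied by Burns--Hale, for arbitrary locally indicable $H$ and arbitrary Fox-derivative pattern $\{g_1, \ldots, g_n\}$.
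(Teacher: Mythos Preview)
Your reduction to the case of a single elementary reduction $Z - X = \{e, f\}$ is fine and matches the paper. The gap is in the ``hard part'' you flag: a V-shaped matching coming from a global left order on $H = \deck(p)$ need not give a well-founded dependency relation once $H$ is not virtually $\Z$. Take $H = \Z^2$ with the lexicographic order and suppose the lifts of $\partial f$ meet $e$ at Fox levels $g_1 = (0,0)$, $g_2 = (1,0)$, $g_3 = (0,1)$. Pairing $f_h$ with $e_{h + (1,0)}$ for $h \geq 0$ and with $e_{h}$ for $h < 0$, one finds that $f_{(-1,0)}$ requires $e_{(-1,1)} = M(f_{(-1,1)})$, which in turn requires $e_{(-1,2)} = M(f_{(-1,2)})$, and so on---an infinite descending chain. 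No single bi-invariant or left-invariant order on $\Z^2$ rescues this, because the Fox pattern forces dependencies that wind around the second coordinate indefinitely within a single half of the order. Your tree picture for $\Z$ genuinely relies on each step moving a bounded amount toward the origin in a discrete, Archimedean order; that fails already for $\Z^2$.

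The paper sidesteps this with an extra layer of induction that your plan is missing. After reducing to $Z$ equal to the closure of a single 2-cell, it inducts on the complexity $c(Z) = |\lambda| - |Z^{(0)}|$. Local indicability of $H$ gives a surjection $\pi_1(Z) \to \Z$ factoring through $H$, hence an intermediate infinite cyclic cover $Y \to K \to Z$; the closure $D \subset K$ of a single lifted 2-cell has $c(D) < c(Z)$, so by the inductive hypothesis $q^{-1}(D)$ reduces to the preimage of either of the two natural 1-subcomplexes $L, U \subset D$ obtained by deleting an extremal lift of $e$. Translates of $D$ then tile $K$, and the $L$/$U$ reductions let you peel off these translates one at a time in both directions along $\Z$---this is the only place a genuine V-shape is used, and it is the $\Z$ case where it actually works. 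In effect the paper replaces your single global order on $H$ by a tower of $\Z$-covers, using the Archimedean V-shape only at each stage of the tower.
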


\begin{proof}
    Express $Z$ as a union of weak elementary reductions $X = X_0\subset X_1\subset X_2\subset \ldots \subset Z$. If for each $n$, we have that $p^{-1}(X_{n+1})$ is weakly reducible to $p^{-1}(X_n)$, then $Y$ will be weakly $\omega^2$-reducible to $p^{-1}(X)$. By \cref{reshuffle}, $Y$ will be weakly reducible to $p^{-1}(X)$. Thus it suffices to prove the result in the case that $X\subset Z$ is a weak elementary reduction. If $Z - X$ does not contain a 2-cell, then the result is clear, so now suppose that $Z - X = e\cup c$, where $e$ is a 1-cell and $c$ is a 2-cell. If $Z_1\subset Z$ denotes the closure of $c$ and $X_1 = X\cap Z_1$, then $X_1\subset Z_1$ is a weak elementary reduction and if $p^{-1}(Z_1)$ weakly reduces to $p^{-1}(X_1)$, then $Y$ weakly reduces to $p^{-1}(X)$. Thus, we may assume that $Z$ itself is the closure of a single 2-cell and $X\subset Z$ is a weak elementary reduction to a 1-subcomplex. 

    The proof now proceeds by induction on $c(Z) = |\lambda| - |Z^{(0)}|$, where $\lambda\colon S^1\to Z^{(1)}$ denotes the attaching map for $c$. Consider the induced epimorphism $\pi_1(Z)\to \deck(p) = G$. Since $Z$ is the closure of a single 2-cell, it is compact and hence $G$ is finitely generated. Since $G$ is finitely generated and locally indicable, either $G = 1$, or there is some epimorphism $\pi_1(Z)\to \Z$ factoring through an epimorphism $G\to \Z$. In the first case there is nothing to prove so, let us assume that we are in the second case. Let $K\to Z$ be the induced $\Z$-cover. By construction, $\pi_1(Y)\leqslant \ker(\pi_1(Z)\to \Z) = \pi_1(K)$ and so $Y\to Z$ factors as a composition of covers $Y\to K\to Z$. Denote by $q\colon Y\to K$. 

    If $e_0\subset K$ is a lift of $e$, denote by $e_i = t^i\cdot e_0$, where $t$ is a generator of the deck group of $K\to Z$. Denote by $D\subset K$ the closure of some lift of $c$. Then let $m$ be the minimal integer and $M$ the maximal such that $e_m, e_M\subset D$. Since $D$ surjects $Z$, we have $c(D)\leqslant c(Z)$. Since $c(D) = c(Z)$ if and only if $D\to Z$ is an isomorphism, it follows that $c(D)<c(Z)$. The pairs $D^{(1)} - e_M = L\subset D$ and $D^{(1)} - e_m = U\subset D$ are both weak elementary reductions.

Now consider the subcomplexes 
\begin{align*}
D_{-1} &= q(p^{-1}(X)),\\
D_0 &= D_{-1}\cup D,\\
D_{i+1} &= D_i \cup t^{-i-1}\cdot D\cup t^{i+1}\cdot D, \qquad \forall i\geqslant 0.
\end{align*}
By the inductive hypothesis, $q^{-1}(t^i\cdot D)$ weakly reduces to $q^{-1}(t^i\cdot L)$ and to $q^{-1}(t^i\cdot U)$ for all $i\in \Z$. Thus we see that $q^{-1}(D_{i+1})$ weakly reduces to $q^{-1}(D_i)$ for all $i\geqslant -1$. But then $q^{-1}(K) = Y$ weakly $\omega^2$-reduces to $q^{-1}(D_{-1}) = p^{-1}(X)$ and so $Y$ weakly reduces to $p^{-1}(X)$ by \cref{reshuffle} as claimed.
\end{proof}

\section{Separating elements in locally indicable quotients}

The following theorem is a classical result due to Weinbaum \cite{We72}.

\begin{theorem}[Weinbaum's Theorem]
\label{weinbaum}
If $F$ is a free group, $w\in F$ is a word and $G = F/\normal{w}$ is a one-relator group, then every proper non-empty subword of $w$ is non-trivial in $G$.
\end{theorem}

Howie generalised \cref{weinbaum} in \cite{Ho82} to one-relator products of locally indicable groups. We here show that if this property holds in a locally indicable quotient of an arbitrary one-relator product, then the kernel admits a particularly nice decomposition as a free product. This will yield consequences for the residual properties of one-relator groups.

\begin{theorem}
\label{free_kernel}
 Let $A$ and $B$ be countable groups, let $n\geqslant 1$ be an integer, let $w\in A*B$ be a cyclically reduced word of length at least two that is not a proper power and consider the one-relator product $G = \frac{A*B}{\normal{w^n}}$. If $\phi\colon G\to Q$ is a homomorphism to a locally indicable group such that $\phi(u)\neq 1$ for each proper non-empty subword $u$ of $w$, then $\ker(\phi)$ splits as a free product of a free group, subgroups of $A$, subgroups of $B$ and copies of $\Z/n\Z$.
\end{theorem}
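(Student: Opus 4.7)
The plan is to realise $G$ as the fundamental group of a 2-complex $Z$ that elementarily reduces to a well-understood subcomplex, apply \cref{locally_indicable_cover} to lift this reduction to the cover $Y\to Z$ corresponding to $\ker\phi$, and then read off the free-product decomposition of $\ker\phi$ from the structure of the reduced cover.

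\textbf{Construction of $Z$.} Take countable presentation 2-complexes $X_A, X_B$ for $A$ and $B$ with basepoints $v_A, v_B$. Write $w = s_1 s_2\cdots s_\ell$ in its alternating $A$-$B$ syllable decomposition; since $w$ is cyclically reduced of length $\ell\geqslant 2$ in $A*B$, $\ell$ is even. Let $Z$ be obtained from $X_A\sqcup X_B$ by adjoining a single 1-cell $e$ from $v_A$ to $v_B$ and a single 2-cell $c$ whose attaching map spells out $w^n$, reading each syllable as a loop in $X_A$ or $X_B$ and crossing $e^{\pm 1}$ at each transition. Then $\pi_1(Z)\cong G$, and for $X := X_A\sqcup X_B$ one has $Z\smallsetminus X = \{e,c\}$, so $X\subset Z$ is an elementary reduction that meets the cell-count hypotheses of \cref{locally_indicable_cover} (the only 2-cell in $Z\smallsetminus X$ is $c$, and the only 1-cell $e$ is traversed only by the attaching map of $c$).

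\textbf{Transfer the reduction to $Y$.} Let $p\colon Y\to Z$ be the regular cover with $p_*\pi_1(Y)=\ker\phi$, so $\deck(p)=Q$ is locally indicable. \cref{locally_indicable_cover} then yields that $Y$ is reducible to $p^{-1}(X)=p^{-1}(X_A)\sqcup p^{-1}(X_B)$. Each component of $p^{-1}(X_A)$ is a cover of $X_A$, hence has fundamental group a subgroup of $A$; symmetrically for $p^{-1}(X_B)$. Since $Q$ is torsion-free and $\phi(w^n)=1$, we have $\phi(w)=1$, so every lift $\tilde c$ of $c$ in $Y$ already closes up after one copy of $w$: its boundary winds $n$ times around a loop $\tilde w_g$ in $Y$ of length $\ell$. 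The separation hypothesis on proper subwords of $w$ forces the vertices traversed during one period of $\tilde w_g$ to be pairwise distinct, and the non-proper-power hypothesis on $w$ ensures that $\tilde w_g$ is not a proper power in the 1-skeleton of $Y$.

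\textbf{Read off the free product.} Assemble $Y$ from $p^{-1}(X)$ in two stages. First adjoin the 1-cells $p^{-1}(e)$ to obtain the intermediate graph of spaces $Y^{\sharp}$ whose underlying graph has vertices indexed by components of $p^{-1}(X)$ and edges indexed by lifts of $e$. Since edge groups are trivial,
\[
\pi_1(Y^{\sharp}) \;\cong\; \bigl(*_i\, \pi_1(C^A_i)\bigr) * \bigl(*_j\, \pi_1(C^B_j)\bigr) * F,
\]
where the $C^A_i, C^B_j$ are the components of $p^{-1}(X_A), p^{-1}(X_B)$ and $F$ is the free fundamental group of the Bass-Serre graph. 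Now attach the 2-cells $p^{-1}(c)$: each one is a degree-$n$ map onto some $\tilde w_g$, so imposes the single relation $\tilde w_g^n = 1$. Using that each $\tilde w_g$ is primitive in $\pi_1(Y^{\sharp})$ and that distinct lifts give distinct such loops (by the two hypotheses on $w$ transferred to $Y$), these torsion relations each contribute a separate $\Z/n\Z$ free factor, yielding
\[
\ker\phi \;\cong\; \bigl(*_i\, \pi_1(C^A_i)\bigr) * \bigl(*_j\, \pi_1(C^B_j)\bigr) * F' * \bigl(*_g\, \Z/n\Z\bigr),
\]
which is the required decomposition.

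\textbf{Main obstacle.} The decisive step is the last one: showing that the family of torsion relations $\tilde w_g^n = 1$ acts independently on $\pi_1(Y^{\sharp})$, each contributing its own $\Z/n\Z$ summand rather than tangling the existing free factors or collapsing several $\tilde w_g$'s together. I expect this to be handled by organising the elementary reductions supplied by \cref{locally_indicable_cover} so that each lift of $c$ is processed one at a time against a lift of $e$ on its boundary, realising the relation $\tilde w_g^n = 1$ as an HNN-style attachment to a loop that is primitive in the preceding stage, and exploiting crucially that $w$ is not a proper power and that $\phi$ separates proper subwords of $w$.
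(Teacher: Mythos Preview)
Your construction of $Z$, the invocation of \cref{locally_indicable_cover}, and the observation that each lifted attaching map factors as an $n$-fold cover followed by a path $\tilde w_g$ that traverses each lift of $e$ at most once are all correct and match the paper exactly. The gap is entirely in the ``Read off the free product'' step.

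You assert that each $\tilde w_g$ is primitive in $\pi_1(Y^{\sharp})$, but your only justification is that $\tilde w_g$ is not a proper power, and this is far weaker than primitivity (consider $[a,b]$ in a free group of rank two). Worse, even if each $\tilde w_g$ were individually primitive, you need the whole family to be \emph{jointly} part of a free-product basis of $\pi_1(Y^{\sharp})$; distinct $\tilde w_g$ can share lifts of $e$, so this does not follow from anything you have written. By adjoining all the edges first and only then the $2$-cells, you have discarded precisely the piece of structure that controls this: the ordering coming from the reduction.

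The paper never forms $Y^{\sharp}$. It stays with the reduction sequence $p^{-1}(X)=X_0\subset X_1\subset\cdots$ furnished by \cref{locally_indicable_cover}. At a step where a lift $\tilde c$ of $c$ is added together with a lift $\tilde e$ of $e$, the boundary of $\tilde c$ traverses $\tilde e$ exactly $n$ times (once per period of $\tilde w_g$). When $n=1$ this is an elementary \emph{collapse}, so $Y$ deformation retracts onto the union of $p^{-1}(X)$ with the leftover $1$-cells, and the free-product decomposition is immediate from the resulting graph of spaces. When $n\geqslant 2$, the same analysis shows $Y$ is homotopy equivalent to that graph of spaces wedged with one copy of the model complex $C_n$ (a disc glued by a degree-$n$ map) for each lift of $c$. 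No global primitivity statement is ever needed; the reduction order hands you the independence one step at a time.

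Your ``Main obstacle'' paragraph already names this fix (process each lift of $c$ against a lift of $e$ on its boundary, along the reduction), so you have located the right argument; it should \emph{replace} the $Y^{\sharp}$ computation, not patch it. One minor correction: $\deck(p)=\Ima(\phi)$, not $Q$; this is harmless since subgroups of locally indicable groups are locally indicable.
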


\begin{proof}
Let $X_A, X_B$ be 2-complexes with $\pi_1(X_A) = A$ and $\pi_1(X_B) = B$, let $X = X_A\sqcup X_B$ and let $Z$ be the 2-complex obtained by attaching a 1-cell $e$ to $X$ connecting the two components and attaching a 2-cell $c$ along an attaching map $\lambda$ spelling out the word $w^n$. Since $w$ is cyclically reduced of length at least two, $\lambda$ traverses $e$ and so $X\subset Z$ is a weak elementary reduction with $\pi_1(Z) = G$. A homomorphism $\phi\colon G\to Q$ to a locally indicable group induces a regular cover $p\colon Y\to Z$ with $\deck(p) = \Ima(\phi)$. By \cref{locally_indicable_cover}, $Y$ is weakly reducible to $p^{-1}(X)$. Since $\phi(u)\neq 1$ for each proper non-empty subword of $w$, it follows that each lift $\tilde{\lambda}$ of $\lambda$ to $Y^{(1)}$ factors as
\[
\begin{tikzcd}
S^1 \arrow[r, "\gamma"'] \arrow[rr, "\tilde{\lambda}", bend left] & S^1 \arrow[r, "\tilde{\lambda}'"'] & Y^{(1)}
\end{tikzcd}
\]
where $\gamma$ is an $n$-fold cover and where $\tilde{\lambda}'$ traverses each lift of $e$ at most once. Hence, when $n = 1$, \cref{locally_indicable_cover} implies that $Y$ is collapsible to a connected subcomplex consisting of $p^{-1}(X)$ and some lifts of $e$. In particular, $Y$ deformation retracts onto a (connected) subcomplex of $Y$ which decomposes as a union of a 1-subcomplex and the components of $p^{-1}(X)$, whence the conclusion of the theorem. Denote by $C_n$ the 2-complex with 1-skeleton $S^1$ and with 2-cell attached along an $n$-fold cover $S^1\to S^1$. If $n\geqslant 2$, then $Y$ is homotopy equivalent to a (connected) subcomplex of $Y$ which decomposes as a union of a 1-subcomplex and the components of $p^{-1}(X)$, wedged with copies of the 2-complexes $C_n$.
\end{proof}

If $\mathcal{C}$ is any collection of groups, we say that $G$ is residually $\mathcal{C}$ if for every non-trivial element $g\in G$, there is an epimorphism $\phi\colon G\to Q$ with $Q\in \mathcal{C}$ such that $\phi(g) \neq 1$. We say $G$ is \emph{fully residually $\mathcal{C}$} if for every finite set of non-trivial elements $S\subset G$, there is an epimorphism $\phi\colon G\to Q$ with $Q\in \mathcal{C}$ such that $1\notin \phi(S)$. When $\mathcal{C}$ is closed under taking subdirect products, the two notions coincide.

\begin{corollary}
\label{freeby}
Let $G$ be a non-free one-relator group and let $\mathcal{C}$ be any collection of locally indicable groups. If $G$ is fully residually $\mathcal{C}$, then $G$ has a surjective homomorphism to a group in $\mathcal{C}$ whose kernel is free.
\end{corollary}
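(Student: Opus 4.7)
The plan is to deduce the corollary from \cref{free_kernel} applied to a single well-chosen epimorphism. Since every locally indicable group is torsion-free, residual $\mathcal{C}$-ness forces $G$ to be torsion-free, and so I can write $G = F/\normal{w}$ with $w$ cyclically reduced and not a proper power. If $|w|\leqslant 1$ then $G$ is itself free and the conclusion is immediate, so assume $|w|\geqslant 2$; in that case I would split $F = A*B$ into two nontrivial free factors arranged so that $w\in A*B$ has free-product length at least two.

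By Weinbaum's theorem every proper nonempty subword of $w$ is nontrivial in $G$, giving a finite set $T\subset G\setminus\{1\}$. Full residual $\mathcal{C}$-ness then yields an epimorphism $\phi\colon G\to C$ with $C\in \mathcal{C}$ and $1\notin \phi(T)$. Applying \cref{free_kernel} with $n=1$ decomposes $\ker(\phi)$ as a free product of a free group together with subgroups of $A$ and of $B$. Since $A$ and $B$ are themselves free groups, every factor in this decomposition is free, and hence so is $\ker(\phi)$.

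This already produces a short exact sequence $1\to \ker(\phi)\to G\to C\to 1$ exhibiting $G$ as a free-by-$\mathcal{C}$ extension. The main remaining obstacle is to upgrade this to a semidirect product $G\isom F\rtimes C$, i.e.\ to produce a group-theoretic section $C\to G$. My approach would be to extract such a splitting from the geometric picture underlying \cref{free_kernel}: the $C$-cover of the standard 2-complex of $G$ deformation retracts onto a graph-of-spaces with trivial edge groups and vertex spaces coming from covers of $X_A\sqcup X_B$, and promoting this retraction to a $C$-equivariant one would yield the desired section. If that does not succeed directly, I would fall back on the Burns--Hale theorem — which shows that the ambient $G$ is itself locally indicable — to refine the choice of $\phi$ and force the extension to split.
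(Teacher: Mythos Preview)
Your core argument is correct and is precisely the proof the paper has in mind: the corollary is stated immediately after \cref{free_kernel} with no explicit proof, and the intended deduction is exactly the one you give --- Weinbaum's theorem makes the proper subwords of $w$ nontrivial in $G$, full residual $\mathcal{C}$-ness separates this finite set in some $C\in\mathcal{C}$, and \cref{free_kernel} with $n=1$ forces the kernel to be free (since the factors $A,B$ are themselves free).

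Your caution about the semidirect product is well-placed, but it points to an imprecision in the paper's statement rather than a gap you are expected to fill. The paper supplies no argument for a genuine splitting, and the surrounding prose consistently says ``free-by-$\mathcal{C}$'' (see the paragraph immediately after the corollary and the statement of \cref{positive}); the symbol $\rtimes$ is evidently being used loosely to mean an extension with free kernel. Neither of your proposed remedies would obviously succeed: the collapse in the proof of \cref{free_kernel} is built cell-by-cell and is not a priori $\deck(p)$-equivariant, and Burns--Hale only tells you that $G$ is locally indicable, which by itself does not force a short exact sequence with free kernel to split. You should therefore read the conclusion as ``$G$ is free-by-$C$ for some $C\in\mathcal{C}$'' and stop after your second paragraph.
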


\begin{proof}
Firstly, since each group in $\mathcal{C}$ is torsion-free, $G$ must also be torsion-free as elements of finite order lie in the kernel of any homomorphism to a torsion-free group. Let $F$ be a free group and $w\in F$ such that $G\cong F/\normal{w}$. Since $G$ is torsion-free and not free, we have that $w$ is an imprimitve element in some free factor of $F$ of rank at least two. Thus, we may express $F = A*B$ with $A, B\neq 1$ and with $w$ being a (after possibly replacing with a conjugate) cyclically reduced word over $A*B$ of length at least two that is also not a proper power. By \cref{weinbaum}, each proper non-empty subword of $w$ is non-trivial in $G$. Since $G$ is fully residually $\mathcal{C}$, there is some group $C\in \mathcal{C}$ and an epimorphism $\phi\colon G\to C$ such that $\phi(u)\neq 1$ for each proper non-empty subword $u$ of $w$. Now we may apply \cref{free_kernel} to obtain that $\ker(\phi)$ is a free group as required.
\end{proof}

\cref{freeby} applies for instance to residually torsion-free nilpotent, residually $\Q$-solvable and fully residually free one-relator groups. Since RFRS (which stands for residually finite rationally solvable) groups are (fully) residually virtually abelian and locally indicable \cite[Theorem 6.3]{OS24}, a fact that Okun--Schreve attribute to Fisher and Kielak, \cref{freeby} also implies that RFRS one-relator groups are free-by-\{virtually abelian locally indicable\}.

\section{The rational derived series of a one-relator group}

We now refine \cref{freeby} further in the case of residually rationally solvable groups. Let $G$ be a group. If $R$ is any ring, denote by
\[
G^{(i)}_{R} = \ker\left(G^{(i-1)}_{R}\to H_1\left(G^{(i-1)}_{R}, R\right)\right)
\]
the $i^{\text{th}}$ term of the \emph{$R$-derived series}, where $G^{(0)}_{R} = G$. A group $G$ is $R$-solvable if $G_{R}^{(n)} = 1$ for some $n$ and is residually $R$-solvable if $\bigcap_{i=0}^{\infty}G_{R}^{(i)} = 1$. Note that $G$ is residually $R$-solvable then it is also fully residually $R$-solvable since being $R$-solvable is closed under subdirect products. The derived series is the $\Z$-derived series and the rational derived series is the $\Q$-derived series defined above, a fact we shall use often in the proof of \cref{main}. Before embarking on the proof, we first deduce the consequences from the introduction.

\begin{proof}[Proof of \cref{main_corollary}]
(\ref{itm2}) implies (\ref{itm1}) since free groups are residually $\Q$-solvable (in fact, they are residually torsion-free nilpotent by a result of Magnus \cite{Ma35}) and a \{residually $\Q$-solvable\}-by-\{$\Q$-solvable\} group is residually $\Q$-solvable. (\ref{itm1}) implies (\ref{itm2}) by \cref{freeby}. (\ref{itm3}) implies (\ref{itm1}) by \cref{main}. Finally, if $G$ is residually $\Q$-solvable, then \cref{main} implies that for any $r\in F$ and $k\geqslant 1$ such that $w\in r^k[\normal{r}, \normal{r}]$, we must have that $\normal{r}_G = 1$ (since $\normal{r}_G$ is $\Q$-perfect) and so $\normal{w} = \normal{r}$. The fact that $w$ must then be conjugate to $r$ or $r^{-1}$ is a theorem of Magnus \cite{Ma30}.
\end{proof}

We may now prove the first part of \cref{positive}.

\begin{corollary}
Torsion-free positive one-relator groups are free-by-($\Q$-solvable) and so are residually $\Q$-solvable. 
\end{corollary}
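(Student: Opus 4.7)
The plan is to verify condition~(3) of \cref{main_corollary} for $w$ positive and not a proper power (which, for a one-relator group, is precisely the torsion-free case). By the equivalences of \cref{main_corollary}, this simultaneously yields both the free-by-$\Q$-solvable conclusion (via the implication~(1)$\Rightarrow$(2), which uses \cref{freeby}) and residual $\Q$-solvability.

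Suppose $w\in r^{k}[\normal{r},\normal{r}]$ with $r\in F$, $|r|\leqslant|w|$, and $k\geqslant 1$. After replacing $r$ by its cyclic reduction and then by its root (which only enlarges $k$), one may assume $r$ is cyclically reduced and not a proper power. Abelianizing, $[w]_{\ab}=k[r]_{\ab}$, and this vector is componentwise non-negative by positivity of $w$. Summing coordinates gives $|w|=k\sum_{x}e_{x}(r)\leqslant k|r|$; combined with $|r|\leqslant|w|$, this forces $|w|/k\leqslant|r|\leqslant|w|$, with equality $|r|=|w|/k$ holding if and only if $r$ has no inverse letters, i.e., $r$ is itself a positive word. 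Next, by Lyndon's identity theorem the abelianization $\normal{r}/[\normal{r},\normal{r}]$ is a free cyclic $\Z[G_{r}]$-module generated by $[r]$, with $G_{r}:=F/\normal{r}$, and the hypothesis becomes the module identity $[w]=k\cdot 1\in\Z[G_{r}]$. When $r$ is positive, the Nielsen--Schreier generators of $\normal{r}$ (with respect to a Schreier transversal for $G_{r}$ containing~$1$) are single positive conjugates of $r$, so the Reidemeister--Schreier rewriting of the positive word $w$ expresses $[w]$ as a sum $\sum_{j}\bar{g}_{j}$ with all coefficients $+1$. No cancellation among positive integer coefficients can produce $k\cdot 1\in\Z[G_{r}]$ unless every $\bar{g}_{j}=1$, in which case every transversal element used in the rewriting is trivial and hence $w=r^{k}$ literally in $F$; the assumption that $w$ is not a proper power then forces $k=1$ and $w=r$, as required.

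The main obstacle I anticipate is the case $k\geqslant 2$ with $r$ non-positive, which the length bound does not by itself rule out. In this case, the Nielsen--Schreier generators of $\normal{r}$ may decompose as $r$-conjugates with mixed signs, so the na\"\i ve ``no cancellation'' argument breaks down. A natural fix is to sharpen the length analysis: writing $w$ as a product of $k$ conjugates of $r$ together with a commutator correction and tracking free cancellation, the positivity of every letter in the reduced word $w$ should preclude any cancellation between consecutive factors, forcing $|w|\geqslant k|r|$ and hence $|r|=|w|/k$, which reduces back to the positive case already handled.
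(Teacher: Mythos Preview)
Your strategy of verifying condition~(\ref{itm3}) of \cref{main_corollary} matches the paper's, which declares the result ``immediate'' from that corollary. But the execution has two genuine gaps.

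The first is your step~9. The claim that for positive $r$ the Nielsen--Schreier generators of $\normal{r}$ are \emph{single} conjugates $g r g^{-1}$ is unjustified: nothing in the Reidemeister--Schreier procedure forces a free basis element of $\normal{r}$ to map to a single group-like element under the identification $\normal{r}^{\ab}\cong\Z[G_r]$. So the assertion ``$[w]=\sum_j\bar g_j$ with all coefficients $+1$'' is not established, and the ``no cancellation forces all $\bar g_j=1$'' conclusion does not follow. The second gap is the one you flag yourself: for non-positive $r$ your length inequality only gives $|w|/k\leqslant|r|\leqslant|w|$, and the proposed fix via ``tracking free cancellation'' in $w=r^k c$ cannot work as stated, since $c\in[\normal{r},\normal{r}]$ can be arbitrarily long and cancel arbitrarily much against $r^k$.

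Both problems dissolve if you bypass Nielsen--Schreier and work directly with $1$-chains in the Cayley graph of $G_r=F/\normal{r}$ (with $r$ cyclically reduced and not a proper power, as you arranged). The hypothesis $w\in r^k[\normal{r},\normal{r}]$ says precisely that the $1$-cycle of the lift $\tilde w$ equals $k$ times the $1$-cycle of a translate $g\cdot\tilde r$, as elements of $Z_1=H_1=\normal{r}^{\ab}$. Since $w$ is positive, every edge-coefficient of $\tilde w$ is non-negative, hence so is every edge-coefficient of $\tilde r$. By Weinbaum's theorem the loop $\tilde r$ is an embedded circle, so its coefficients are all $\pm 1$; non-negativity forces them all to be $+1$, i.e.\ $r$ is positive---this disposes of your residual case for free. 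Now $\tilde w$ is a forward-only closed walk supported on that embedded circle and traversing each edge exactly $k$ times, hence $w$ equals the $k$-th power of a cyclic permutation of $r$ as a word in $F$. Torsion-freeness of $G$ ($w$ not a proper power) then forces $k=1$ and $w$ conjugate to $r$.
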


\begin{proof}
Let $F$ be a free group, $S\subset F$ a free basis and $w\in F$ a cyclically reduced word that is positive over $S$. Let $r\in F$ be a cyclically reduced element and let $k\geqslant 1$ be such that $w \in r^k[\normal{r}, \normal{r}]$. Let $X$ be a presentation complex for $F/\normal{w}$. Let $Y\to X$ be the cover corresponding to the subgroup $\normal{r}\triangleleft F/\normal{w}=\pi_1(X)$ and let $\lambda\colon S^1\to Y$ be the cycle labelled by $w$ (at a chosen basepoint). By \cref{weinbaum}, the cycle $\rho\colon S^1\to Y$ labelled by $r$ is embedded. Let $C\subset Y$ be the image of $\rho$. By definition of $r$, we have that $[\lambda] = [\rho]^k$ as elements of $Z_1(Y, \Z)$. But since $w$ is positive, $[\lambda]$ is a sum of 1-cells, one for each 1-cell that $\lambda$ traverses, and each with coefficient $+1$. Since these 1-cells must be precisely the 1-cells in $C$, this implies $\lambda$ is supported in $C$. Since $w$ is not a proper power, this means that $k = 1$ and $w = r$. Now \cref{main_corollary} implies that $F/\normal{w}$ is residually $\Q$-solvable.
\end{proof}

Although one-relator groups with torsion cannot be residually $\Q$-solvable, we may use the canonical torsion-free quotient to establish residual solvability in some cases.

\begin{lemma}
\label{resid_solvable_torsion}
Let $F$ be a free group, let $w\in F$ be an element that is not a proper power and let $n\geqslant 2$ be an integer. If $F/\normal{w}$ is residually $\Q$-solvable, then $F/\normal{w^n}$ is residually solvable.
\end{lemma}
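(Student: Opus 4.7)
The plan is to apply \cref{free_kernel} to the canonical projection $\phi\colon H \to G$, where $H = F/\normal{w^n}$ and $G = F/\normal{w}$, and then combine the resulting description of $N := \ker(\phi)$ with the residual $\Q$-solvability of $G$. Since $G$ is residually $\Q$-solvable, each quotient $G/G^{(k)}_{\Q}$ is poly-(torsion-free abelian), so $G$ itself is torsion-free and hence locally indicable by Brodskii's theorem. Writing $F = A * B$ so that $w$ is cyclically reduced of length at least two in $A * B$ (the degenerate case in which $w$ is conjugate to a generator of $F$ is straightforward: $H$ is then a free product of $\Z/n\Z$ with a free group and hence residually solvable), Weinbaum's theorem gives that every proper non-empty subword of $w$ is nontrivial in $G$. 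The hypotheses of \cref{free_kernel} are thus met, and $N$ splits as a free product of free groups (subgroups of $A$ and $B$) together with copies of $\Z/n\Z$.

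Next I would verify that $N$ itself is residually solvable. By Kurosh's subgroup theorem, the commutator subgroup $[N, N]$ intersects each conjugate of a finite free factor trivially and hence is free; consequently, $N$ is free-by-(abelian of bounded exponent), and since free groups are residually torsion-free nilpotent by Magnus's theorem, $N$ is residually solvable. In particular, writing $N^{(i)}_{\Z}$ for the $i$-th derived subgroup of $N$, the intersection $\bigcap_i N^{(i)}_{\Z}$ is trivial.

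For the final step, take $h \in H \setminus \{1\}$. If $\phi(h) \neq 1$ then residual $\Q$-solvability of $G$ provides a solvable quotient of $G$ separating $\phi(h)$ from the identity, and composing with $\phi$ completes this case. Otherwise $h \in N$, and residual solvability of $N$ produces some $i$ with $h \notin N^{(i)}_{\Z}$. Since $N^{(i)}_{\Z}$ is characteristic in $N$ it is normal in $H$, and the quotient $H/N^{(i)}_{\Z}$ fits in a short exact sequence $1 \to N/N^{(i)}_{\Z} \to H/N^{(i)}_{\Z} \to G \to 1$ with $N/N^{(i)}_{\Z}$ solvable. To exhibit a solvable quotient of $H/N^{(i)}_{\Z}$ in which the image of $h$ survives, one combines a sufficiently deep $\Q$-solvable quotient $G \to G/G^{(j)}_{\Q}$ with the derived-series filtration of $N/N^{(i)}_{\Z}$: on each abelian layer $N^{(k)}_{\Z}/N^{(k+1)}_{\Z}$, the conjugation action of $G$ is approximated by $\Q$-solvable quotients of $G$, yielding a solvable quotient of $H/N^{(i)}_{\Z}$ that retains $h$. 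The main obstacle is this final combination step, where one reconciles the $\Z[G]$-module structure on the abelian layers of $N$ with the residual $\Q$-solvability of $G$; this uses residual $\Q$-solvability essentially, ensuring that augmentation-ideal powers act residually trivially on the relevant modules.
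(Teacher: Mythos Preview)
Your application of \cref{free_kernel} to the map $H \to G$ is valid, and your analysis of $N$ is fine. The gap is exactly where you flag it: the ``combination step'' at the end. You have arranged $H$ as an extension of a residually solvable group $N$ by a residually $\Q$-solvable group $G$, but (residually solvable)-by-(residually solvable) is not in general residually solvable, and your sketch involving augmentation ideals does not close this. Concretely, when you pass from $H/N^{(i)}_{\Z}$ to a further quotient by the preimage of $G^{(j)}_{\Q}$, the normal closure in $H/N^{(i)}_{\Z}$ of that preimage may well swallow the image of $h$: the conjugation action on the abelian layers of $N$ need not become trivial merely because $G$ is being approximated by solvable quotients.

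The paper sidesteps this entirely with a one-line trick. Since residually $\Q$-solvable coincides with \emph{fully} residually $\Q$-solvable (the class is closed under direct products), and $w$ has only finitely many proper non-empty subwords, there is a \emph{single} $\Q$-solvable---hence solvable and locally indicable---quotient $Q$ of $G$ in which every such subword survives. Apply \cref{free_kernel} to the composite $H \to G \to Q$ rather than to $H \to G$. The kernel is still a free product of a free group with copies of $\Z/n\Z$, hence residually solvable; but now the quotient $Q$ is actually solvable. An extension of a residually solvable group by a solvable group \emph{is} residually solvable, by precisely the characteristic-subgroup argument you wrote down, and the proof is complete without any module-theoretic combination step.
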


\begin{proof}
If $F/\normal{w}$ is residually $\Q$-solvable, there is a homomorphism $F/\normal{w}\to Q$ to a $\Q$-solvable group $Q$ such that each proper non-empty subword of $w$ is non-trivial in the image. Composing the epimorphism $F/\normal{w^n}\to F/\normal{w}$, we obtain a homomorphism $F/\normal{w^n}\to Q$ with kernel a free product of a free group and copies of $\Z/n\Z$ by \cref{free_kernel}. Since this kernel is residually solvable, $F/\normal{w^n}$ is residually solvable.
\end{proof}

Using \cref{resid_solvable_torsion} we may complete the proof of \cref{positive}.

\begin{corollary}
Positive one-relator groups are free-by-solvable and so are residually solvable.
\end{corollary}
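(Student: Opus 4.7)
The plan is to refine \cref{resid_solvable_torsion} by one further step. Write $G = F/\normal{w^n}$ with $w$ a positive word that is not a proper power and $n\geqslant 1$. The case $n=1$ is handled by the preceding corollary, so I will focus on $n\geqslant 2$.

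By the torsion-free case, $F/\normal{w}$ is residually $\Q$-solvable, hence fully residually $\Q$-solvable since $\Q$-solvability is closed under direct products. Using Weinbaum's theorem \cite{We72} to guarantee non-triviality of proper nonempty subwords of $w$ in $F/\normal{w}$, I obtain a homomorphism $\phi\colon G\to Q$ factoring through $F/\normal{w}$, with $Q$ being $\Q$-solvable, under which every proper nonempty subword of $w$ has non-identity image. \cref{free_kernel} then identifies $K := \ker\phi$ as a free product of a free group $F'$ and copies of $\Z/n\Z$.

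The key new ingredient is that $[K,K]$ is itself a free group. Since $[K,K]$ is normal in $K$ and each $\Z/n\Z$-factor embeds into the abelianisation $K^{\ab}$, Kurosh's subgroup theorem presents $[K,K]$ as a free product of a free group together with conjugates of $[F',F']$ and conjugates of trivial subgroups of the torsion factors; each piece is free, so $[K,K]$ is free. Being characteristic in $K$, it is normal in $G$, and $G/[K,K]$ is an extension of the solvable group $Q$ by the abelian group $K^{\ab}$, hence solvable. This yields the free-by-solvable conclusion. Residual solvability then follows from Magnus' residual torsion-free nilpotence of free groups \cite{Ma35}: the derived series of $[K,K]$ consists of characteristic subgroups of $[K,K]$ with trivial intersection, hence of normal subgroups of $G$, and each quotient $G/[K,K]^{(i)}$ is solvable-by-solvable, therefore solvable, separating all elements of $G$.

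The only step requiring genuinely new input is the Kurosh computation identifying $[K,K]$ as free; everything else is bookkeeping on top of the argument already given in \cref{resid_solvable_torsion}.
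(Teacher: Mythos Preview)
Your proposal is correct and is precisely the argument the paper leaves implicit. The paper's ``proof'' consists only of the sentence ``Using \cref{resid_solvable_torsion} we may complete the proof of \cref{positive}'', and \cref{resid_solvable_torsion} as stated only yields residual solvability; you have supplied the missing step to obtain the stronger free-by-solvable conclusion by passing to $[K,K]$.

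One small remark on your Kurosh description: your claim that the non-free pieces are conjugates of $[F',F']$ is in fact correct, since $[K,K]$ is normal in $K$ and $[K,K]\cap gF'g^{-1} = g([K,K]\cap F')g^{-1} = g[F',F']g^{-1}$ (the last equality because the abelianisation of a free product is the direct product of abelianisations, so an element of $F'$ lies in $[K,K]$ exactly when it lies in $[F',F']$). You might want to say this explicitly rather than leave it to the reader, since at first glance one only expects arbitrary free subgroups of conjugates of $F'$. The invocation of Weinbaum's theorem is appropriate (positive words are automatically cyclically reduced), and the paper uses the same fact implicitly in the proof of \cref{resid_solvable_torsion}.
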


The following is a slightly expanded version of \cref{algorithm} from the introduction.

\begin{corollary}
There is an algorithm that, on input a word $w$ over an alphabet $S\sqcup S^{-1}$, computes an element $r$ which normally generates $G_{\Q}^{(\omega)}$, where $G = F(S)/\normal{w}$, and moreover, decides whether $G$ is residually $\Q$-solvable.
\end{corollary}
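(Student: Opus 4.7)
The plan is to combine the length bound $|r| \leq |w|$ from \cref{main} with the characterisation of residual $\Q$-solvability in item~(\ref{itm3}) of \cref{main_corollary}, reducing the whole task to a finite search supported by a single decidable subroutine.

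The key subroutine is, given words $u, v \in F(S)$, to decide whether $u \in v^k[\normal{v}, \normal{v}]$ for some integer $k \geq 1$. I would first test $u \in \normal{v}$, which is the word problem in the one-relator group $F(S)/\normal{v}$ and hence decidable by Magnus's theorem. Assuming this membership holds, the condition translates to $\bar{u} = k \bar{v}$ in the relation module $M = \normal{v}/[\normal{v}, \normal{v}]$. Via Fox calculus, $M$ embeds into $\Z[F(S)/\normal{v}]^{|S|}$ through $x \mapsto (\partial x/\partial x_s)_s$, so the equation becomes $(\partial u/\partial x_s)_s = k(\partial v/\partial x_s)_s$ in $\Z[F(S)/\normal{v}]^{|S|}$. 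After collecting like terms using the word problem in $F(S)/\normal{v}$, this is a finite linear system over $\Z$ with unknown $k$, admitting at most one candidate, which is then straightforward to verify.

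With this subroutine, item~(\ref{itm3}) of \cref{main_corollary} immediately gives a decision procedure for residual $\Q$-solvability: $G = F(S)/\normal{w}$ is residually $\Q$-solvable if and only if no reduced word $r$ of length $\leq |w|$ not conjugate to $w^{\pm 1}$ satisfies the condition $w \in r^k[\normal{r}, \normal{r}]$ for some $k \geq 1$. To compute the generator of $G_\Q^{(\omega)}$, form the finite set $\mathcal{V}$ of reduced words $r$ of length $\leq |w|$ (modulo conjugacy and inversion) such that both $w \in r^k[\normal{r}, \normal{r}]$ for some $k \geq 1$ and $F(S)/\normal{r}$ is residually $\Q$-solvable (the latter decidable by the procedure just described, applied with input $r$). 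By \cref{main}, $\mathcal{V}$ is nonempty, and the canonical generator $r^*$ is characterised in $\mathcal{V}$ as the one with smallest normal closure in $F(S)$: any other $r \in \mathcal{V}$ yields a residually $\Q$-solvable quotient $F(S)/\normal{r}$ of $G$, which factors through the maximal residually $\Q$-solvable quotient $G/G_\Q^{(\omega)} = F(S)/\normal{r^*}$, forcing $\normal{r^*} \subseteq \normal{r}$. Inclusions $\normal{r_1} \subseteq \normal{r_2}$ reduce to the word problem $r_1 \in \normal{r_2}$, so the minimum is found by pairwise comparison, and Magnus's theorem on normal closures of single elements gives uniqueness up to conjugacy and inversion.

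The main obstacle is the Fox-calculus step in the subroutine, specifically ensuring that the reduction to a finite linear system over $\Z$ handles the degenerate case $v \in [F(S), F(S)]$, in which the augmentation from $M$ to $F(S)^{\mathrm{ab}}$ vanishes on $\bar{v}$ and does not immediately pin down $k$. Handling this cleanly should require either a careful analysis via the augmentation-ideal filtration or deriving a computable a-priori bound on $k$ (for instance, polynomial in $|u|$ and $|v|$), after which finitely many candidates can be checked directly.
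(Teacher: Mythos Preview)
Your proposal is correct, and your subroutine via Fox derivatives is essentially the same device as the paper's: the chain in $C_1$ of the Cayley-graph ball that the paper computes is precisely the vector of Fox derivatives of $w$ reduced modulo $\normal{r}$. Your worry about the degenerate case $v\in[F(S),F(S)]$ is unfounded, since the comparison $\bar u = k\bar v$ takes place in $\Z[F(S)/\normal{v}]^{|S|}$ itself, not in its augmentation $\Z^{|S|}$; the embedding $M\hookrightarrow \Z[F(S)/\normal{v}]^{|S|}$ that you invoke already guarantees $\bar v\neq 0$ whenever $v\neq 1$, so a unique candidate $k$ can always be read off from any nonzero coordinate and then verified directly.

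Where your argument genuinely diverges from the paper's is in locating $r^*$ among the candidates. The paper observes that for \emph{every} $r$ of length at most $|w|$ with $w\in r^k[\normal{r},\normal{r}]$ the subgroup $\normal{r}_G$ is $\Q$-perfect, hence contained in $G_{\Q}^{(\omega)}=\normal{r^*}_G$; thus $r^*$ is the candidate with \emph{largest} normal closure, and no recursion is needed. You instead impose the extra filter that $F(S)/\normal{r}$ be residually $\Q$-solvable---decidable by the first half of your procedure---and then pick the candidate with \emph{smallest} normal closure, using that $F(S)/\normal{r^*}$ is the maximal residually $\Q$-solvable quotient of $G$. Both characterisations are valid; the paper's is more direct and avoids the recursive call to the decision procedure, while yours ties the output more explicitly to the universal property of the quotient.
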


\begin{proof}
We first show that given a word $r$ over $S$, we can decide whether there is some $k\geqslant 1$ such that $w\in r^k[\normal{r}, \normal{r}]$. The word problem for one-relator groups is decidable by a result of Magnus \cite{Ma32}. Hence, we can compute arbitrarily large balls in the Cayley graph for $F(S)/\normal{r}$. Computing the ball $B(|w|)$ of radius $|w|$ in the Cayley graph, to conclude the (sub)procedure we simply need to determine whether the chain in $C_1(B(|w|))$ determined by $w$ is equal to a multiple of the cycle determined by $r$ in $C_1(B(|w|))$, which is certainly decidable.

Now we enumerate all words $r$ of length bounded above by that of $w$ and use the above procedure to determine whether $w\in r^k[\normal{r}, \normal{r}]$ or not. For each such $r$, the subgroup $\normal{r}_G$ is $\Q$-perfect (as remarked in the introduction). Since the maximal $\Q$-perfect subgroup of $G$ is unique, there will be some such $r$ such that all others lie in $\normal{r}$. Since membership in $\normal{r}$ is decidable by \cite{Ma32}, we may find the $r$ that generates the maximal $\Q$-perfect subgroup of $G$. Finally, by \cref{main_corollary}, to determine whether $\normal{r}_G = 1$, we simply need to check whether $w$ is conjugate to $r$ or $r^{-1}$.
\end{proof}

We now turn to the proof of \cref{main}. We shall first need a simple, yet critical, lemma. Recall that if $R$ is a ring, the group ring $RG$ is the ring with elements the formal sums $r = \sum_{g\in G}r_g\cdot g$ where $r_g\in R$ and $r_g\neq 0$ for all but finitely many elements. The support of the element $r$ in the group ring $RG$ is defined as the set
\[
\supp(r) = \{g\in G \mid r_g \neq 0\}\subset G.
\]
Note that $\supp(rs)\subset \supp(r)\supp(s)$ for all $r, s\in RG$.

\begin{lemma}
\label{key_lemma}
Let $G$ be locally indicable, let $R$ be a domain and let $0\neq a, b\in RG$. Then $\supp(ab)\subset\supp(b)$ if and only if $\supp(a) = 1$.
\end{lemma}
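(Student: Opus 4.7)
The plan is to dispatch the ``if'' direction by direct computation: $\supp(a)=\{1\}$ means $a=r\in R\setminus\{0\}$, so $ab = rb$ has support $\supp(b)$ since $R$ is a domain. For ``only if'', I will induct on $|\supp(a)|$. In the base case $|\supp(a)|=1$, writing $a=rg$ reduces the containment to $g\cdot\supp(b)\subset\supp(b)$; since this makes $g$ permute the finite set $\supp(b)$, $g$ must have finite order and hence $g=1$ by torsion-freeness of $G$ (a consequence of local indicability, e.g.\ via Burns--Hale).

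For the inductive step with $|\supp(a)|\geq 2$, the clean sub-case is $1\in\supp(a)$: I plan to subtract off the identity component by setting $a' = a - a_1\cdot 1$ where $a_1\in R\setminus\{0\}$ is the identity coefficient. Then $|\supp(a')| = |\supp(a)|-1$ and $\supp(a'b) \subset \supp(ab)\cup\supp(a_1 b) = \supp(b)$, so the inductive hypothesis forces $\supp(a')=\{1\}$, which contradicts $1\notin\supp(a')$ and rules out this sub-case.

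The remaining sub-case $1\notin\supp(a)$ is the main obstacle, where local indicability must enter in an essential way. My plan is to pass to the nontrivial finitely generated subgroup $H = \langle\supp(a)\cup\supp(b)\rangle$, use local indicability to pick a surjection $\phi\colon H\twoheadrightarrow\Z$ with kernel $K$ (itself locally indicable), and split $a$ and $b$ by cosets of $K$. The top- and bottom-$\phi$ coefficients of $ab$, in their respective cosets, are (after translation into $R[K]$) products of the corresponding extremal components of $a$ and $b$; an inductive application of the lemma inside $R[K]$ shows these products are non-zero. The containment $\supp(ab)\subset\supp(b)$ then traps the extremal $\phi$-values within $\phi(\supp(b))$, forcing either a direct contradiction when $\phi(\supp(a))$ contains a nonzero value, or $\phi(\supp(a))=\{0\}$ and hence $a\in R[K]$, reducing the problem strictly to the smaller locally indicable group $K$. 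The hardest part will be organising the induction so that this descent terminates, relying on the finiteness of $\supp(a)$ combined with the fact that, at some stage, some $\phi$ must separate distinct elements of $\supp(a)$.
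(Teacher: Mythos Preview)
Your ``if'' direction, the base case $|\supp(a)|=1$, and the sub-case $1\in\supp(a)$ are all correct. The genuine gap is in the sub-case $1\notin\supp(a)$, and it propagates back through the whole induction: your $1\in\supp(a)$ argument at level $k$ appeals to the inductive hypothesis at level $k-1$, which in turn requires the $1\notin\supp(a)$ sub-case at level $k-1$ to have been settled.

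The descent you sketch does not terminate under the induction you set up. To conclude that the extremal product $a_M b_N$ is nonzero you need the no-zero-divisor statement in $RK$, which is itself a consequence of the lemma for $K$; but after translating, the relevant element $g^{-1}a_M\in RK$ can have $|\supp(g^{-1}a_M)|=|\supp(a)|$ (this happens precisely when $\phi$ is constant on $\supp(a)$), so the hypothesis ``$|\supp|<k$'' is unavailable. Likewise, when $\phi(\supp(a))=\{0\}$ and you pass to $K$, neither $|\supp(a)|$ nor $|\supp(b)|$ need drop, and there is no well-founded quantity attached to the chain $H\supsetneq K_1\supsetneq K_2\supsetneq\cdots$ on which to induct. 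Your remark that ``at some stage some $\phi$ must separate distinct elements of $\supp(a)$'' is not justified: for a given finitely generated $H$ there is no reason any surjection $H\to\Z$ should be nonconstant on a prescribed finite subset.

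The paper avoids this entirely. It first reduces to $1\in\supp(a)$ by replacing $a$ with $a+1$ (note $\supp((a+1)b)\subset\supp(b)$ still holds), then uses Burns--Hale to pass from local indicability to right orderability, and finally invokes Passman's lemma on uniquely represented products in $\supp(a)\cdot\supp(b)$: there exist $g_1h_1,\,g_2h_2$ with $g_1\neq g_2$ each expressible in only one way, hence lying in $\supp(ab)$; the containment $\supp(ab)\subset\supp(b)$ then forces $g_1=g_2=1$ via the unique factorisation $g_ih_i = 1\cdot(g_ih_i)$, a contradiction. No induction on support size is needed. If you want to salvage your direct approach, the cleanest route is to first establish separately that $RG$ is a domain for $G$ locally indicable (which is exactly what your descent is circling around), and the standard proof of that fact already goes through orderability.
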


\begin{proof}
Suppose for a contradiction that $\supp(a)\neq 1$, but that $\supp(ab)\subset \supp(b)$. If $1\notin \supp(a)$, then $\supp((a+1)b)\subset \supp(b)$, so by possibly replacing $a$ with $a+1$, we may assume that $1\in \supp(a)$. In particular, $|\supp(a)|\geqslant 2$. Since locally indicable groups are right orderable by a result of Burns--Hale \cite{BH72}, we may use \cite[Lemma 13.1.7]{Pa77} to conclude that there are two uniquely represented elements $g_1\cdot h_1, g_2\cdot h_2\in\supp(a)\cdot\supp(b)$ with $g_1\neq g_2$. If $g_1h_1\in \supp(b)$, then $g_1\cdot h_1 = 1\cdot(g_1h_1)$ implies that $g_1 = 1$. Similarly for $g_2h_2$. Since $g_1\neq g_2$, this implies that either $g_1h_1\notin\supp(b)$ or $g_2h_2\notin\supp(b)$. But this is a contradiction and so $\supp(a) = 1$.
\end{proof}

\begin{proof}[Proof of \cref{main}]
If $w = 1$ in $F$, then taking $r = 1$ the result holds since Magnus showed that free groups are residually $\Q$-solvable in \cite{Ma35}. So from now on suppose that $w\neq 1$. Let $S\subset F$ be a free generating set and let $X$ be a presentation complex for $G$ with a 1-cell for each $s\in S$ and a 2-cell spelling out $w$. Let 
\[
\ldots \to Y_n\to \ldots \to Y_1 \to Y_0 = X
\]
denote the regular covers with $\pi_1(Y_n) = G_{\Q}^{(n)}$ and denote by $N_n = \pi_1\left(Y_n^{(1)}\right)\triangleleft F$. The groups $G_n = F/N_n = G/G_{\Q}^{(n)}$ are $\Q$-solvable and so are each locally indicable. 

The maps $Y_{n+1}\to Y_{n}$ induce commuting maps of chain complexes:
\[
    \begin{tikzcd}
    0 \arrow[r] &C_2(Y_{n+1}, \Q) \arrow[r, "d_2^{n+1}"] \arrow[d] & C_1(Y_{n+1}, \Q) \arrow[r, "d_1^{n+1}"] \arrow[d] & C_0(Y_{n+1}, \Q) \arrow[r, "d_0^{n+1}"] \arrow[d] & \Q \arrow[r] \arrow[d] & 0 \\
    0 \arrow[r] & C_2(Y_{n}, \Q) \arrow[r, "d_2^{n}"] & C_1(Y_{n}, \Q) \arrow[r, "d_1^{n}"] & C_0(Y_{n}, \Q) \arrow[r, "d_0^{n}"] & \Q \arrow[r] & 0
    \end{tikzcd}
    \]
where each $C_i(Y_n, \Q)$ can be considered as a free $\Q G_n$-module with basis in correspondence with $G_n$-orbits of $i$-cells:
\begin{align*}
C_0(Y_n, \Q) &\isom \Q G_n\\
C_1(Y_n, \Q) &\isom \oplus_{s\in S}\Q G_n\\
C_2(Y_n, \Q) &\isom \Q G_n.
\end{align*}

Now denote by $\Gamma_n\subset Y_n^{(1)}$ the minimal (possibly empty) subcomplex such that $d^n_2(1)$ is supported in $C_1(\Gamma_n, \Q)\leqslant C_1(Y_n, \Q)$. Here $1$ is the unit in $\Q G_n \isom C_2(Y_n, \Q)$, or in other words, a 2-chain corresponding to a choice of a lift of the 2-cell in $X$ to $Y_n$. 

We have maps $C_1(\Gamma_{n+1}, \Q)\to C_1(Y_{n}, \Q)$, induced by the maps $C_1(Y_{n+1}, \Q)\to C_1(Y_{n}, \Q)$, such that the image of $C_1(\Gamma_{n+1}, \Q)$ contains $C_1(\Gamma_{n}, \Q)$. Thus, we have
\[
\dim_{\Q}C_1(\Gamma_{n}, \Q)\leqslant \dim_{\Q}C_1(\Gamma_{n+1}, \Q)\leqslant |\lambda|.
\]
where $\lambda\colon S^1\to X^{(1)}$ is the attaching map of the 2-cell. It follows that for some $m$, we have isomorphisms $C_1(\Gamma_n, \Q)\to C_1(\Gamma_m, \Q)$, and hence maps $\Gamma_n\to\Gamma_m$ for all $n\geqslant m$, induced by the covers $Y_n\to Y_m$. By possibly increasing $m$, we also have isomorphisms $C_0(\Gamma_n, \Q)\to C_0(\Gamma_m, \Q)$ and hence the maps $\Gamma_n\to \Gamma_m$ become isomorphisms for all $n\geqslant m$. Since $C_1(\Gamma_{m+1}, \Q)\to C_1(\Gamma_m, \Q)$ is an isomorphism and $\pi_1(Y_{m+1}) = \ker(\pi_1(Y_m)\to H_1(Y_m, \Q))$, it follows that the image of the cycles $Z_1(\Gamma_m, \Q)$ under the inclusion $Z_1(\Gamma_m, \Q)\to Z_1(Y_m, \Q)$ must lie in the boundaries $B_1(Y_m, \Q)$. 

We now claim that 
\[
\dim_{\Q}Z_1(\Gamma_m, \Q) \leqslant 1.
\]
Denote by $d_2^m(1) = \delta$ and by $\delta_s\in \Q G_m$ the projection of $\delta$ to the $\Q G_m$ factor corresponding to $s\in S$. If $\delta_s = 0$ for all $s\in S$, then $\Gamma_m = \emptyset$ and so the claim holds. Now suppose there is some $s\in S$ such that $\delta_s \neq 0$. If $\alpha\in \Q G_m$ such that $d_2^m(\alpha)\in Z_1(\Gamma_m, \Q)$, then $\alpha\cdot \delta\in Z_1(\Gamma_m, \Q)$. By definition of $\Gamma_m$, this implies that $\supp(\alpha\cdot\delta_s)\subset \supp(\delta_s)$. Since $G_m$ is locally indicable, by \cref{key_lemma} we have that $\supp(\alpha) = 1$ and so $\alpha\in \Q$. Thus, 
\[
Z_1(\Gamma_m, \Q)\cap B_1(Y_m, \Q) = Z_1(\Gamma_m, \Q) = \Q\cdot \delta.
\]
In particular, $\Gamma_m\isom S^1$. This completes the proof of the claim.

If $\Gamma_m = \emptyset$, set $\overline{X} = X^{(1)}$ and $r = 1\in F$. If $\Gamma_m\neq\emptyset$, denote by $\rho\colon S^1\to X^{(1)}$ the cycle (unique up to orientation) that lifts to an embedding in $\Gamma_m$ and let $\overline{X}$ be the 2-complex with the same 1-skeleton as $X$, but with a single 2-cell attached along $\rho$ instead of $\lambda$. Let $r\in F$ be the corresponding group element. We will now show that $r$ is the element required by the theorem.

By definition of $\rho$, for each $n\geqslant m$ there is a cover $\overline{Y}_n\to \overline{X}$ which on 1-skeleta is $Y_n^{(1)}\to X^{(1)}$. This identification induces $\Q G_n$-isomorphisms $C_i(Y_n, \Q)\to C_i(\overline{Y}_n, \Q)$ commuting with the boundary maps for $i\leqslant 1$. A lift of $\rho$ to $\Gamma_m$ yields a generator for $Z_1(\Gamma_m, \Z)$, but a lift of $\lambda$ to $Y_m$ only yields a generator of $Z_1(\Gamma_m, \Q)$. Note that since $\lambda$ is the attaching map of the 2-cell, there is a lift $\tilde{\lambda}\colon S^1\to Y_m$ such that the induced image of a generator of $Z_1(S^1, \Q)$ is precisely $d_2^m(1) \in Z_1(\Gamma_m, \Q)$. Thus, there is some integer $k\geqslant 1$ such that the $\Q G_n$-isomorphism $C_2(Y_n, \Q) \to C_2(\overline{Y}_n, \Q)$ defined by sending $1$ to $k$ times the 2-chain corresponding to the lift of the 2-cell supported in $\Gamma_n\subset \overline{Y}^{(1)}_n = Y_n^{(1)}$ commutes with the boundary maps:
\[
    \begin{tikzcd}
    0 \arrow[r] &C_2(Y_{n}, \Q) \arrow[r, "d_2^{n}"] \arrow[d, "\times k"] & C_1(Y_{n}, \Q) \arrow[r, "d_1^{n}"] \arrow[d] & C_0(Y_{n}, \Q) \arrow[r, "d_0^{n}"] \arrow[d] & \Q \arrow[r] \arrow[d] & 0 \\
    0 \arrow[r] & C_2(\overline{Y}_{n}, \Q) \arrow[r, "\overline{d}_2^{n}"] & C_1(\overline{Y}_{n}, \Q) \arrow[r, "\overline{d}_1^{n}"] & C_0(\overline{Y}_{n}, \Q) \arrow[r, "\overline{d}_0^{n}"] & \Q \arrow[r] & 0
    \end{tikzcd}
    \]
Note that when $r = 1$ we have that $Z_2(Y_n, \Q) = C_2(Y_n, \Q)$ and $C_2(\overline{Y}, \Q) = 0$. Hence, taking $k = 0$ the above diagram also commutes. By the definition of the rational derived group, the induced isomorphism $H_1(Y_n, \Q)\to H_1(\overline{Y}_n, \Q)$ from above implies that $\pi_1\left(\overline{Y}_{n+1}\right) = \pi_1\left(\overline{Y}_n\right)^{(1)}_{\Q}$ for all $n\geqslant m$. Hence, we have isomorphisms
\[
G_{m+i} = F/N_{m+i} = \pi_1(X)/\pi_1(Y_m)^{(i)}_{\Q} \isom \pi_1\left(\overline{X}\right)/\pi_1\left(\overline{Y}_{m}\right)_{\Q}^{(i)} = \pi_1\left(\overline{X}\right)/\pi_1\left(\overline{Y}_{m+i}\right)
\]
for all $i\geqslant 0$.

We now claim that $\pi_1\left(\overline{Y}_m\right)$ is free. If $r = 1$, this is clear since $\pi_1\left(\overline{X}\right)$ itself is free. If $r$ mentions only a single generator from $S$, then this is also clear since $\pi_1\left(\overline{X}\right)$ would also be free itself. Now suppose that $S = S_1\sqcup S_2$ and $r$ mentions a generator in $S_1$ and a generator in $S_2$. Since each lift of $\rho$ to $\overline{Y}_m$ is an embedding, we have that the image of each proper non-empty subword of $r$ under the homomorphism $(F(S_1)*F(S_2))/\normal{r} = \pi_1\left(\overline{X}\right)\to G_m = \deck\left(\overline{Y}_m\to\overline{X}\right)$ is non-trivial. Since $G_m$ is locally indicable, we may now apply \cref{free_kernel} to conclude that $\pi_1\left(\overline{Y}_m\right)$ is free. 

Since $\pi_1\left(\overline{Y}_m\right)$ is free, 
\[
\bigcap_{i=0}^{\infty}\pi_1\left(\overline{Y}_{m+i}\right) = \pi_1\left(\overline{Y}_{\omega}\right) = 1
\]
since free groups are residually $\Q$-solvable by a result of Magnus \cite{Ma35}. In particular, 
\[
\bigcap_{i=0}^{\infty}N_i = \pi_1\left(\overline{Y}_{\omega}^{(1)}\right) = \pi_1\left(Y_{\omega}^{(1)}\right) = \normal{r}.
\]
Finally, since the homology class of $w$ is equal to that of $r^k$ in $H_1\left(\normal{r}, \Z\right) \isom \normal{r}/[\normal{r}, \normal{r}]$, we have that $w\in r^k[\normal{r}, \normal{r}]$ as claimed.
\end{proof}

\begin{example}
Consider the one-relator group
\[
G = \langle a, b \mid [a[[a,b], [b, a^{-1}]], b]\rangle.
\]
We may re-express this relator as
\begin{align*}
[a[[a,b], [b, a^{-1}]], b] &= [a[[a,b], [a,b]^a], b]\\
					& = [a, b] \left([[a,b],[a,b]^a]^{ba^{-1}b^{-1}}\left([[a,b],[a,b]^a]^{-1}\right)^{a^{-1}b^{-1}}\right)
\end{align*}
and so $G_{\Q}^{(\omega)} = G_{\Q}^{(1)} = \normal{[a,b]}_G$. Hence, $G$ is not residually $\Q$-solvable and its abelianisation is its maximal $\Q$-solvable quotient. In fact, since $G_{\Q}^{(1)}$ is perfect, $G$ is not residually solvable either.
\end{example}

\subsection{The derived series}

We suspect the same results proved in this article should hold for the usual derived series of one-relator groups. Unfortunately, local indicability of the successive quotients was essential for our proofs. Following a suggestion of Jens Harlander, we may show that there is one case in which our result applies to the derived series.

\begin{corollary}
Let $G = F/\normal{w}$ be a one-relator group with $H^2(G) = 0$. There is a word $r\in F$ such that $w\in r[\normal{r}, \normal{r}]$ and
    \[
    G^{(\omega+1)} = G^{(\omega)} = \normal{r}_G.
    \]
In particular, the maximal residually solvable quotient of $G$ is the one-relator group $F/\normal{r}$.
\end{corollary}

\begin{proof}
If $G = F/\normal{w}$ is a one-relator group with $H^2(G) = 0$, we have that $H_2(G) = 0$ and $H_1(G)$ is torsion-free by Lyndon's identity theorem \cite{Ly50}. Hence, $G$ is an $E$-group in the sense of Strebel and so we may apply \cite[Theorem A]{St74} to conclude that $G^{(\alpha)} = G_{\Q}^{(\alpha)}$ for all ordinals $\alpha$. By \cref{main}, there is a word $r\in F$ and $k\geqslant 1$ such that $G^{(\omega)} = \normal{r}_G$ and such that $w\in r^k[\normal{r}, \normal{r}]$. Since $H_2(G) = 0$, we have that $r^k$ is non-trivial in the abelianisation of $F$. Since $H_1(G)$ is torsion-free, $r$ is not a proper power in the abelianisation of $H_1(G)$ and so $k = 1$.
\end{proof}

We conclude with a conjecture.

\begin{conjecture}
\label{conjecture}
Let $G = F/\normal{w}$ be a one-relator group. There is a word $r\in F$ such that $w\in r[\normal{r}, \normal{r}]$ and
    \[
    G^{(\omega+1)} = G^{(\omega)}  = \normal{r}_G.
    \]
In particular, the maximal residually solvable quotient of $G$ is the one-relator group $F/\normal{r}$.
\end{conjecture}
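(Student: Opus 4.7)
The plan is to mirror the architecture of the proof of \cref{main}, working with integral coefficients throughout. Construct the tower $\ldots \to Y_n \to \ldots \to Y_0 = Z$ of regular covers with $\pi_1(Y_n) = G^{(n)}$, set $G_n = G/G^{(n)}$, and consider the chain complexes $C_*(Y_n;\Z)$ as free $\Z G_n$-modules. Define $\Gamma_n \subset Y_n^{(1)}$ to be the minimal subcomplex supporting $d_2^n(1) \in C_1(Y_n;\Z)$. The $\Z$-rank of $C_1(\Gamma_n;\Z)$ is nondecreasing in $n$ and bounded by the length of the attaching map of the $2$-cell of $Z$, so the tower stabilizes at some stage $m$; the aim is to prove that $Z_1(\Gamma_m;\Z)$ is infinite cyclic, which then furnishes an embedded cycle $\rho$ in $\Gamma_m$ whose image in $Z^{(1)}$ spells out an element $r \in F$ with $w \in r^k[\normal{r},\normal{r}]$.

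The reason to expect $k = 1$ here, in contrast to \cref{main}, is that over $\Z$ the chain $d_2^m(1)$ is already a generator of $Z_1(\Gamma_m;\Z)$ rather than merely a $\Q$-multiple of one, so $w$ and $r$ differ by an element of $[\normal{r},\normal{r}]$ on the nose. With $r$ in hand, one builds the presentation complex $\overline{Z}$ for $F/\normal{r}$ and, as in the concluding argument of \cref{main}, shows that the covers $\overline{Y}_n$ associated to the ordinary derived series of $F/\normal{r}$ satisfy $\pi_1(\overline{Y}_{n+1}) = \pi_1(\overline{Y}_n)^{(1)}$. If one can further establish that $\pi_1(\overline{Y}_m)$ is free, then it is residually solvable (indeed residually torsion-free nilpotent by Magnus \cite{Ma35}), which gives $\bigcap_i G^{(i)} = \normal{r}_G$ and identifies $F/\normal{r}$ as the maximal residually solvable quotient.

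The main obstacle is exactly the one the author flags: both the rank bound on $Z_1(\Gamma_m;\Z)$ (via an integral analogue of \cref{key_lemma}) and the freeness of $\pi_1(\overline{Y}_m)$ (via \cref{free_kernel}) rest on the solvable quotients $G_n$ being locally indicable. For the rational derived series the quotients are iteratively torsion-free-abelian-by-torsion-free-abelian, so local indicability follows inductively, but for the ordinary derived series this already fails at level $1$ whenever the exponent sums of $w$ share a nontrivial common factor. A natural workaround is to pass to the canonical torsion-free refinement $G = H_0 \supseteq H_1 \supseteq \ldots$ defined by $H_{n+1} = \ker(H_n \to H_n^{\ab}/\text{torsion})$; the quotients $G/H_n$ are torsion-free poly-(torsion-free abelian), hence locally indicable, so the paper's machinery applies verbatim to produce a candidate $r$. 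The hard step is then to match $\bigcap H_n$ with $\bigcap G^{(n)}$: since $G^{(n)} \subseteq H_n$ one always has $\bigcap G^{(n)} \subseteq \bigcap H_n$, but equality quantifies how much torsion the ordinary derived series introduces at each stage beyond the torsion-free one. Closing this gap likely requires one-relator-specific input, for instance via Magnus's hierarchical decomposition of $F/\normal{w}$ and an induction on the complexity of $w$ that tracks when torsion in successive abelianisations is eventually killed.
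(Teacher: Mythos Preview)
The statement you are attempting is presented in the paper as a \emph{conjecture}, not a theorem; the author explicitly writes that local indicability of the successive quotients was essential to the method and that the analogue for the ordinary derived series is left open. There is therefore no proof in the paper to compare against, and your proposal must be judged on its own merits as an attack on an open problem.

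Read that way, your proposal is a diagnosis rather than a proof. You correctly locate the two places where the argument of \cref{main} breaks---the support bound of \cref{key_lemma} and the freeness conclusion of \cref{free_kernel} both need $G_m = G/G^{(m)}$ to be locally indicable, which already fails at $m=1$ whenever the exponent-sum vector of $w$ is divisible---but you do not repair either one. Two further points:
\begin{itemize}
\item Your ``torsion-free refinement'' $H_{n+1}=\ker\bigl(H_n\to H_n^{\ab}/\text{torsion}\bigr)$ is \emph{precisely} the rational derived series $G^{(n)}_{\Q}$. So this detour does not produce a new tower; it recovers exactly the tower of \cref{main} and its output $r$ with $w\in r^k[\normal{r},\normal{r}]$. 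Nothing new about $\bigcap G^{(n)}$ is obtained.
\item What remains is the inclusion $\normal{r}_G = \bigcap G^{(n)}_{\Q} \subseteq \bigcap G^{(n)}$ (the reverse inclusion is trivial since $G^{(n)}\leqslant G^{(n)}_{\Q}$). You call this the ``hard step'' and gesture at a Magnus-style induction, but give no mechanism. This inclusion is the entire content of the conjecture; absent a concrete idea here, the proposal does not advance beyond the paper's own discussion.
\end{itemize}
Your heuristic for $k=1$ is reasonable \emph{conditionally}: if the integral version of the stabilisation and support arguments went through, then $Z_1(\Gamma_m;\Z)$ would equal $\Z\cdot d_2^m(1)$ and $d_2^m(1)$ would indeed be a generator. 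But this is exactly contingent on the two steps that fail, so it cannot be claimed independently.
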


\bibliographystyle{amsalpha}
\bibliography{bibliography}

\end{document}